\documentclass[12pt]{amsart}
\usepackage{amssymb,latexsym}
\usepackage{enumerate}
\usepackage{amsmath, amsthm, amsfonts, amssymb, mathrsfs}
\usepackage{verbatim}
\usepackage{hyperref}
\usepackage{color}

\begin{document}

% define theorem environments
\newtheorem{theorem}{Theorem}    %[section]
\newtheorem{proposition}[theorem]{Proposition}
\newtheorem{conjecture}[theorem]{Conjecture}
\def\theconjecture{\unskip}
\newtheorem{corollary}[theorem]{Corollary}
\newtheorem{lemma}[theorem]{Lemma}
\newtheorem{sublemma}[theorem]{Sublemma}
\newtheorem{observation}[theorem]{Observation}
\theoremstyle{definition}
\newtheorem{definition}{Definition}
\newtheorem{notation}[definition]{Notation}
\newtheorem{remark}[definition]{Remark}
\newtheorem{question}[definition]{Question}
\newtheorem{questions}[definition]{Questions}
\newtheorem{example}[definition]{Example}
\newtheorem{problem}[definition]{Problem}
\newtheorem{exercise}[definition]{Exercise}

\numberwithin{theorem}{section} \numberwithin{definition}{section}
\numberwithin{equation}{section}

\def\earrow{{\mathbf e}}
\def\rarrow{{\mathbf r}}
\def\uarrow{{\mathbf u}}
\def\varrow{{\mathbf V}}
\def\tpar{T_{\rm par}}
\def\apar{A_{\rm par}}

\def\reals{{\mathbb R}}
\def\torus{{\mathbb T}}
\def\heis{{\mathbb H}}
\def\integers{{\mathbb Z}}
\def\naturals{{\mathbb N}}
\def\complex{{\mathbb C}\/}
\def\distance{\operatorname{distance}\,}
\def\support{\operatorname{support}\,}
\def\dist{\operatorname{dist}\,}
\def\Span{\operatorname{span}\,}
\def\degree{\operatorname{degree}\,}
\def\kernel{\operatorname{kernel}\,}
\def\dim{\operatorname{dim}\,}
\def\codim{\operatorname{codim}}
\def\trace{\operatorname{trace\,}}
\def\Span{\operatorname{span}\,}
\def\dimension{\operatorname{dimension}\,}
\def\codimension{\operatorname{codimension}\,}
\def\nullspace{\scriptk}
\def\kernel{\operatorname{Ker}}
\def\ZZ{ {\mathbb Z} }
\def\p{\partial}
\def\rp{{ ^{-1} }}
\def\Re{\operatorname{Re\,} }
\def\Im{\operatorname{Im\,} }
\def\ov{\overline}
\def\eps{\varepsilon}
\def\lt{L^2}
\def\diver{\operatorname{div}}
\def\curl{\operatorname{curl}}
\def\etta{\eta}
\newcommand{\norm}[1]{ \|  #1 \|}
\def\expect{\mathbb E}
\def\bull{$\bullet$\ }
\newtheorem{pro}{Proposition}[section]
\def\xone{x_1}
\def\xtwo{x_2}
\def\xq{x_2+x_1^2}
\newcommand{\abr}[1]{ \langle  #1 \rangle}
\allowdisplaybreaks

\newcommand{\Norm}[1]{ \left\|  #1 \bigg\| }
\newcommand{\set}[1]{ \left\{ #1 \bigg\} }
\def\one{\mathbf 1}
\def\whole{\mathbf V}
\newcommand{\modulo}[2]{[#1]_{#2}}

\def\scriptf{{\mathcal F}}
\def\scriptg{{\mathcal G}}
\def\scriptm{{\mathcal M}}
\def\scriptb{{\mathcal B}}
\def\scriptc{{\mathcal C}}
\def\scriptt{{\mathcal T}}
\def\scripti{{\mathcal I}}
\def\scripte{{\mathcal E}}
\def\scriptv{{\mathcal V}}
\def\scriptw{{\mathcal W}}
\def\scriptu{{\mathcal U}}
\def\scriptS{{\mathcal S}}
\def\scripta{{\mathcal A}}
\def\scriptr{{\mathcal R}}
\def\scripto{{\mathcal O}}
\def\scripth{{\mathcal H}}
\def\scriptd{{\mathcal D}}
\def\scriptl{{\mathcal L}}
\def\scriptn{{\mathcal N}}
\def\scriptp{{\mathcal P}}
\def\scriptk{{\mathcal K}}
\def\frakv{{\mathfrak V}}

\begin{comment}
	\def\scriptx{{\mathcal X}}
	\def\scriptj{{\mathcal J}}
	\def\scriptr{{\mathcal R}}
	\def\scriptS{{\mathcal S}}
	\def\scripta{{\mathcal A}}
	\def\scriptk{{\mathcal K}}
	\def\scriptp{{\mathcal P}}
	\def\frakg{{\mathfrak g}}
	\def\frakG{{\mathfrak G}}
	\def\boldn{\mathbf N}
\end{comment}

\author{Hui Li}
\address{Hui Li\\
	School of Mathematics and Information Science\\
	Henan Polytechnic University\\
	Jiaozuo 454000\\
	People's Republic of China} \email{huili@hpu.edu.cn}

\author{Ziyuan Lian}
\address{Ziyuan Lian\\
	School of Mathematics and Information Science\\
	Henan Polytechnic University\\
	Jiaozuo 454000\\
	People's Republic of China} \email{zylian@home.hpu.edu.cn}

\author{Honghai Liu}
\address{Honghai Liu\\
	School of Mathematics and Information Science\\
	Henan Polytechnic University\\
	Jiaozuo 454000\\
	People's Republic of China} \email{hhliu@hpu.edu.cn}

\author{Zengyan Si}
\address{Zengyan Si\\
	School of Mathematics and Information Science\\
	Henan Polytechnic University\\
	Jiaozuo 454000\\
	People's Republic of China} \email{zengyan@hpu.edu.cn}

\author{Jie Wang}
\address{
	Jie Wang\\
	School of Mathematical Sciences\\
	University of Chinese Academy of Sciences\\
	Beijing 100049\\People's
	Republic of China} \email{jiewang@home.hpu.edu.cn}

\thanks{ The work is supported by Natural Science Foundation of China (No.12526417, No.12571101), the Fundamental Research Funds for the Universities of Henan Province (No.NSFRF2502089) and Postgraduate Education Reform and Quality Improvement
Project of Henan Province (No.YJS2026YBGZZ10).}

\keywords{Littlewood-Paley $g_\alpha$  function; fractional poisson kernel; weak type behaviors.}

\date{}

\title[On $L^p$ bounds for the Littlewood-Paley $g_\alpha$ function]{On $L^p$ bounds for the Littlewood-Paley $g_\alpha$ function with fractional poisson kernel}

\maketitle

\begin{abstract}
Let \(0<\alpha\leq1\), and let \(g_\alpha\) be the Littlewood-Paley function associated with the fractional Poisson kernel, which reduces to the classical \(g\)-function when \(\alpha=1\). We establish dimension-free \(L^p\) bounds for \(g_\alpha\) for every \(1<p<\infty\) and identify its exact \(L^2\) norm as \(\sqrt{\alpha/2}\). We further determine the precise asymptotic behavior of the weak-type \((1,1)\) constant as \(n\to\infty\), yielding quantitative upper and lower bounds in terms of the dimension.
\end{abstract}

\section{Introduction}

For $f \in \mathscr{S}(\mathbb R^n)$, the class of Schwartz function on $\mathbb R^n$, the $\alpha$-harmonic extension $u = u(x,t)$ is defined as the unique solution to the boundary value problem:
\[
\begin{cases}
	\partial_t^2 u + \dfrac{1-\alpha}{t}\partial_t u + \Delta_x u = 0, & (x,t)\in \mathbb R_+^{n+1} := \mathbb R^n \times (0,\infty), \\[4pt]
	u(x,0) = f(x), & x\in \mathbb R^n.
\end{cases}
\]
 This extension is realized explicitly via the fractional Poisson kernel
\[
P_t^\alpha(x) = \frac{c_{n,\alpha} t^\alpha}{(t^2 + |x|^2)^{\frac{n+\alpha}{2}}},
\qquad
c_{n,\alpha} = \frac{\Gamma((n+\alpha)/2)}{\pi^{n/2}\Gamma(\alpha/2)},
\]
so that
\[
u(x,t) = (P_t^\alpha * f)(x) = c_{n,\alpha} \int_{\mathbb R^n} \frac{t^\alpha }{(|x-y|^2 + t^2)^{\frac{n+\alpha}{2}}}f(y)\,dy.
\]
The normalization constant $c_{n,\alpha}$ is chosen so that $\int_{\mathbb R^n} P_t^\alpha(x)\,dx = 1$, ensuring that the extension preserves constants.

%The interplay between fractional differential operators and harmonic analysis has witnessed remarkable progress over the past two decades, largely due to the fundamental extension method introduced by Caffarelli and Silvestre \cite{CS}. Their seminal work established that the fractional Laplacian $(-\Delta)^{\alpha/2}$, $0<\alpha<2$, can be realized as the Dirichlet-to-Neumann map of a degenerate elliptic equation in one higher dimension. This insight has revolutionized the study of nonlocal operators, providing a powerful local framework for problems ranging from fractional Sobolev spaces to stochastic processes and free boundary problems.
The extension method, originally introduced by Caffarelli and Silvestre \cite{CS}, has found diverse applications in analysis and PDEs. In the study of fractional Schr\"odinger equations, Ambrosio and Hajaiej \cite{AH} employed the $\alpha$-harmonic extension to establish the existence of multiple positive solutions via variational methods, exploiting the local structure of the extended problem to overcome the nonlocality of the fractional Laplacian. The technique has also been instrumental in proving regularity and stability estimates for fractional elliptic equations, as demonstrated in the works of Cabr\'e and Tan \cite{CT}, and later extended to nonlinear settings by numerous authors. In harmonic analysis, the $\alpha$-harmonic extension is intimately connected with Poisson semigroups and Littlewood--Paley theory, providing a bridge between fractional operators and classical square functions.

A fundamental result in classical Littlewood--Paley theory asserts that, for every $1<p<\infty$, there is a constant $C_p$ such that
\begin{equation}\label{eq:stein-classical}
\norm{g(f)}_{L^p(\mathbb R^n)}\le C_p\norm{f}_{L^p(\mathbb R^n)},
\end{equation}
with $C_p$ independent of the dimension $n$. For $0<\alpha\leq 1$, the fractional Littlewood--Paley $g_\alpha$ function associated with the fractional Poisson kernel $P_t^\alpha$ is defined by
\begin{equation}\label{eq: fractional}
g_\alpha(f)(x) = \left( \int_0^\infty |\nabla u(x,t)|^2 \, t \, dt \right)^{1/2}.
\end{equation}

In the case $\alpha=1$, $g_\alpha$ coincides with the classical Littlewood--Paley $g$-function, which is one of the most fundamental objects in harmonic analysis, serving as a quantitative measure of oscillation and a key tool in the study of function spaces. In the classical setting, Stein \cite{Stein70, Stein71} established that the $g$-function is bounded on $L^p(\mathbb R^n)$ for all $1<p<\infty$ and is of weak type $(1,1)$. The study of fractional square functions, particularly in the context of semigroups, has its roots in the work of Segovia and Wheeden\cite{Segovia}. They introduced a notion of fractional derivative and used it to develop a theory of Euclidean square functions for potential spaces. This idea was later adapted to a general semigroup setting by Betancor et al.\cite{Betancor}, who used these fractional derivatives to characterize potential spaces for the Ornstein-Uhlenbeck operator. More recently, the theory of square functions has been instrumental in providing new characterizations of fractional Sobolev spaces. Sato, Wang, Yang, and Yuan \cite{sato} established generalized Littlewood-Paley characterizations of fractional Sobolev spaces. For recent advances and applications of fractional square functions, we refer the reader to\cite{bao,cao,wang} and the references therein.

Dimension-free estimates are essential for extending results from finite to infinite dimensions.These results reveal a rich structure where certain operators exhibit bounds independent of the dimension, while others grow at a controlled rate. This research was pioneered by Stein\cite{Stein82}, who obtained dimension-free bounds on $L^p(\mathbb{R}^n)(1 < p \leq \infty)$ for the centered Hardy-Littlewood maximal function over Euclidean balls. Notable contributions include Stein and Str\"omberg \cite{SS}, Bourgain \cite{Bourgain86a, Bourgain86b, Bourgain14}, and Bourgain, Mirek, Stein, and Wr\'obel \cite{BMSW18, BMSW19, BMSW20}. In particular, Stein \cite{Stein86} proved that, for $1<p<\infty,$ the $L^p$ bounds of the classical Littlewood--Paley $g$ function and Riesz transform are independent of the dimension $n$. For the case $p=1$, Janakiraman\cite{Janakiraman} showed that the weak type $(1,1)$ bound of $R_j$ (a special singular integral) is at worst $O(\log n)$. Recently, Lai\cite{Lai} showed the weak type  $(1,1)$ bound of the classical Littlewood--Paley $g$ function is at worst $O(n^3)$. On the other hand, Janakiraman\cite{Janakiraman2} investigated the limiting weak-type behavior of the Hardy-Littlewood maximal function $M$, providing a new approach to determining lower bounds for the optimal constants of $M$, as well as for other operators including singular integrals and fractional integral operators. For further developments, see \cite{Ding1, Ding2, Guo, Hou} and the references cited therein. In this paper, we establish dimension-free \(L^p\) estimates for \(g_\alpha\) for every \(1<p<\infty\), with exact \(L^2\) norm \(\sqrt{\alpha/2}\). We then study the dimensional dependence of the weak-type \((1,1)\) constant and determine its precise asymptotic behavior as \(n\to\infty\).
 Our main results are as follows.

% recent years have witnessed growing interest in dimension-independent estimates for classical operators in harmonic analysis. This line of research was motivated by questions in high-dimensional analysis and by the desire to understand the dependence of operator norms on the ambient dimension.

%This result is somewhat surprising, as it shows that despite the fractional nature of the kernel and the presence of the full gradient in the definition of $\mathfrak{g}_\alpha$, the $L^2$-operator norm is an absolute constant depending only on $\alpha$. The proof exploits the explicit Fourier multiplier representation of the fractional Poisson kernel and the integral identities for modified Bessel functions.

\begin{theorem}[Dimension-free estimate]\label{thm:main}
Let $1<p<\infty$, $n\ge1$, and $0<\alpha\le1$. Then, for every $f\in L^p(\mathbb R^n)$,
\begin{equation*}\label{eq:main-Lp}
||g_\alpha(f)||_{L^p(\mathbb R^n)}\le C_p||f||_{L^p(\mathbb R^n)},
\end{equation*}
where the constant $C_p$ is independent of both $n$ and $\alpha$.
\end{theorem}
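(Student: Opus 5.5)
The plan is to reduce $g_\alpha$ to square functions of the heat semigroup $H_s=e^{s\Delta}$ by Gamma subordination. Then I use the known dimension-free bounds for those heat square functions, and check that the averaging in the subordination variable costs only a constant independent of $n$ and $\alpha$.

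\emph{Step 1 (subordination).} For $s>0$ let $h_s(x)=(4\pi s)^{-n/2}e^{-|x|^2/(4s)}$ be the heat kernel. Substitute $s=t^2/(4u)$, so that $(4\pi s)^{-n/2}=(\pi t^2/u)^{-n/2}$ and the Gaussian becomes $e^{-u|x|^2/t^2}$. A direct Gamma-integral computation then gives
\[
P_t^\alpha(x)=\int_0^\infty w_\alpha(u)\, h_{t^2/(4u)}(x)\,du,\qquad w_\alpha(u)=\frac{u^{\alpha/2-1}e^{-u}}{\Gamma(\alpha/2)} .
\]
Indeed, the $u$-integral produces $\Gamma(\frac{n+\alpha}{2})\,\pi^{-n/2}\,t^\alpha (t^2+|x|^2)^{-(n+\alpha)/2}/\Gamma(\alpha/2)$, which is exactly $c_{n,\alpha}t^\alpha(t^2+|x|^2)^{-(n+\alpha)/2}$. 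Note that $w_\alpha$ is a probability density. Hence $u(x,t)=\int_0^\infty w_\alpha(u)\,H_{t^2/(4u)}f(x)\,du$, and we may differentiate under the integral sign.

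\emph{Step 2 (Minkowski and change of variables).} By Minkowski's inequality in $L^2(t\,dt)$,
\[
g_\alpha f(x)\le \int_0^\infty w_\alpha(u)\Big(\int_0^\infty |\nabla_{x,t} H_{t^2/(4u)}f(x)|^2\,t\,dt\Big)^{1/2}du .
\]
For fixed $u$, put $s=t^2/(4u)$, so that $t\,dt=2u\,ds$ and $\partial_t=\sqrt{s/u}\,\partial_s$. The inner integral then equals
\[
2u\,G_x(f)(x)^2+2\,G_s(f)(x)^2,
\]
where
\[
G_x(f)=\Big(\int_0^\infty|\nabla_x H_sf|^2ds\Big)^{1/2},\qquad G_s(f)=\Big(\int_0^\infty s|\partial_sH_sf|^2ds\Big)^{1/2}.
\]
Therefore
\[
g_\alpha f\le \sqrt2\,\Big(\int_0^\infty w_\alpha(u)(1+\sqrt u)\,du\Big)\big(G_x f+G_s f\big).
\]
The weight integral equals $1+\Gamma(\frac{\alpha+1}{2})/\Gamma(\frac{\alpha}{2})$, which is bounded by an absolute constant for $0<\alpha\le1$, since the ratio tends to $0$ as $\alpha\to0$ and is continuous on $(0,1]$. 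So everything is reduced to dimension-free $L^p$ bounds for $G_x$ and $G_s$, and these involve neither $\alpha$ nor any fractional structure.

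\emph{Step 3 (heat square functions).} For $G_s$, I invoke Stein's Littlewood--Paley theory for symmetric diffusion semigroups \cite{Stein70}. That theory gives $\|G_sf\|_p\le C_p\|f\|_p$ with $C_p$ depending only on $p$, because the heat semigroup is a symmetric Markov semigroup in every dimension.

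The main obstacle is the horizontal gradient function $G_x$. I would follow Stein's dimension-free argument \cite{Stein86}.

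For $p\ge2$, use the identity $(\partial_s-\Delta)(H_sf)^2=-2|\nabla_xH_sf|^2$ for $f\ge0$ (reducing to this case by splitting $f$ into positive and negative parts). Pair $G_x(f)^2$ with $\phi\ge0$ in $L^{(p/2)'}$ and integrate by parts in $s$ and $x$. This bounds $\langle G_x(f)^2,\phi\rangle$ by quantities controlled by $\|\sup_s |H_sf|\|_p^2\,\|\sup_s H_s\phi\|_{(p/2)'}$, plus a term handled through $G_s$. The heat maximal function is dimension-free by Stein's maximal theorem for semigroups.

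For $1<p<2$, I would use Stein's subharmonicity trick. Apply the same identity to $(H_sf)^q$ for a suitable $1<q<p$, which gives the pointwise bound $G_x(f)\lesssim (\sup_s |H_sf|)^{1-q/2}\,\big(\int_0^\infty(\partial_s-\Delta)(H_sf)^q\,ds\big)^{1/2}$. Then apply H\"older's inequality and again the maximal theorem, keeping track that no constant depends on $n$.

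Combining Steps 1--3 yields $\|g_\alpha f\|_p\le C_p\|f\|_p$ with $C_p$ independent of $n$ and $\alpha$.
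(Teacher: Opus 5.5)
Your Steps 1 and 2 are correct, and they give a genuinely different reduction from the paper's. The Gamma subordination holds for every $0<\alpha\le 1$. The substitution $s=t^2/(4u)$ produces exactly $2uG_x(f)^2+2G_s(f)^2$, and $\Gamma(\frac{\alpha+1}{2})/\Gamma(\frac{\alpha}{2})\le 1/\sqrt{\pi}$ on $(0,1]$.

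The paper instead subordinates $P_t^\alpha$ to the \emph{Poisson} kernels, $P_t^\alpha=\int_1^\infty \mathbb P_{rt}\,d\mu_\alpha(r)$, with $\mu_\alpha$ a probability measure on $[1,\infty)$. After Minkowski and $s=rt$, the horizontal term carries the factor $r^{-2}\le 1$. Hence $g_\alpha(f)\le g(f)$ pointwise, and Stein's dimension-free theorem for the classical $g$-function closes the argument at once. Your density lives on all of $(0,\infty)$, so the analogous factor $u$ is unbounded and can only be absorbed on average. You therefore land on heat-semigroup square functions rather than on $g$, and all the work moves into Step 3.

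Step 3 is where the gap is, specifically $G_x$ for $p>2$. The other pieces are fine:
\begin{itemize}
\item $G_s$ follows from Stein's universal bound for the vertical $g_1$-function of a symmetric diffusion semigroup.
\item For $1<p\le 2$ your argument works once you take $q=p$; H\"older with exponents $\frac{2}{2-p},\frac{2}{p}$ forces this. You also need the sign fixed: $(\Delta-\partial_s)(H_sf)^p=p(p-1)(H_sf)^{p-2}|\nabla_xH_sf|^2\ge 0$.
\end{itemize}
For $p>2$, pairing with $\phi\ge 0$ first requires the Jensen step $|\nabla_xH_{2s}f|^2\le H_s(|\nabla_xH_sf|^2)$, so that $\phi$ is replaced by $H_s\phi$. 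Integration by parts then gives
\[
\langle G_x(f)^2,\phi\rangle\le\int f^2\phi\,dx-4\int_0^\infty\!\!\int H_sf\,\nabla_xH_sf\cdot\nabla_xH_s\phi\,dx\,ds .
\]
The cross term is horizontal in both $f$ and $\phi$. It is bounded by $4\int f^*\,G_x(f)\,G_x(\phi)\,dx$, not by maximal functions plus a $G_s$ term as you assert, so your sketch does not close.

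There are two ways to repair it.
\begin{itemize}
\item Bootstrap: use $G_x$ on $L^{(p/2)'}$ together with a priori finiteness of $\|G_xf\|_p$ and absorb. This gives $p\ge 4$, and $2<p<4$ follows by interpolating the linear map $f\mapsto\nabla_xH_sf$.
\item More cleanly, use Riesz transforms. Write $\nabla_xH_sf=-(-\Delta)^{1/2}H_s(Rf)$ and $\lambda^{1/2}e^{-s\lambda}=\pi^{-1/2}\int_s^\infty(\sigma-s)^{-1/2}\lambda e^{-\sigma\lambda}\,d\sigma$. Hardy's inequality for the kernel $(\sigma-s)_+^{-1/2}\sigma^{-1/2}$ (homogeneous of degree $-1$, norm $\pi$) then gives $G_x(f)\le\sqrt{\pi}\,\big(\sum_j G_s(R_jf)^2\big)^{1/2}$ pointwise. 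Stein's dimension-free Riesz transform bound and a Gaussian-randomized vector-valued $G_s$ bound then cover all $1<p<\infty$.
\end{itemize}
Either way, your route needs an extra deep dimension-free input that the paper's choice of subordination avoids entirely.
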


\begin{theorem}[$L^2$ identities with explicit constant]\label{thm:L2}
	For $f \in L^2(\mathbb R^n)$ and $0<\alpha\leq 1$, we have
	\[
	\|g_\alpha(f)\|_{L^2(\mathbb R^n)} = \sqrt{\frac{\alpha}{2}} \|f\|_{L^2(\mathbb R^n)}.
	\]
	\end{theorem}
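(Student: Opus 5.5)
The plan is to compute everything on the Fourier side, using Plancherel, and to reduce the identity to a one-variable integral. That integral can then be evaluated from the ODE satisfied by the radial profile of the multiplier, with no Bessel-function integral tables needed.

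\emph{Step 1: the multiplier.} Normalize $\widehat f(\xi)=\int f(x)e^{-2\pi i x\cdot\xi}\,dx$. Since $P_t^\alpha(x)=t^{-n}P_1^\alpha(x/t)$ is radial, $\widehat{P_t^\alpha}(\xi)=\varphi(2\pi t|\xi|)$ for a single profile $\varphi:[0,\infty)\to\mathbb R$. One can identify $\varphi(s)=\frac{2^{1-\alpha/2}}{\Gamma(\alpha/2)}s^{\alpha/2}K_{\alpha/2}(s)$, but I will only use the following properties:
\begin{itemize}
\item $\varphi(0)=1$, since $\int P_t^\alpha=1$;
\item $\varphi$ and $\varphi'$ decay exponentially at infinity;
\item $\varphi$ solves the ODE obtained by inserting $\widehat u(\xi,t)=\varphi(2\pi t|\xi|)\widehat f(\xi)$ into the extension equation, namely
\[
\varphi''(s)+\frac{1-\alpha}{s}\varphi'(s)-\varphi(s)=0,\qquad s>0;
\]
\item $\varphi'(s)=O(s^{\alpha-1})$ as $s\to0^+$, which follows from the small-argument expansion of $K_{\alpha/2}$ (or directly from the ODE via $(s^{1-\alpha}\varphi')'=s^{1-\alpha}\varphi$).
\end{itemize}

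\emph{Step 2: Plancherel.} For $f\in\mathscr S$, Fubini--Tonelli and Plancherel in $x$ give
\[
\|g_\alpha f\|_2^2=\int_0^\infty\!\!\int_{\mathbb R^n}\Big(|2\pi\xi|^2\varphi(2\pi t|\xi|)^2+|2\pi\xi|^2\varphi'(2\pi t|\xi|)^2\Big)|\widehat f(\xi)|^2\,d\xi\,t\,dt .
\]
Here the first term comes from $\nabla_x u$ and the second from $\partial_t u$. Exchanging the integrals and substituting $s=2\pi t|\xi|$ for fixed $\xi\ne0$, the $\xi$-dependence cancels exactly, because $t\,dt\,|2\pi\xi|^2=s\,ds$. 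This yields
\[
\|g_\alpha f\|_2^2=I_\alpha\|f\|_2^2,\qquad I_\alpha=\int_0^\infty\big(\varphi(s)^2+\varphi'(s)^2\big)s\,ds .
\]

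\emph{Step 3: evaluating $I_\alpha$.} Integrate by parts:
\[
\int_0^\infty s\varphi'^2\,ds=\big[s\varphi\varphi'\big]_0^\infty-\int_0^\infty\varphi\,(s\varphi')'\,ds .
\]
The ODE gives $(s\varphi')'=s\varphi''+\varphi'=s\varphi+\alpha\varphi'$. Substituting,
\[
\int_0^\infty s\varphi'^2\,ds=\big[s\varphi\varphi'\big]_0^\infty-\int_0^\infty s\varphi^2\,ds-\frac{\alpha}{2}\big[\varphi^2\big]_0^\infty .
\]
The boundary term $s\varphi\varphi'$ vanishes at $\infty$ by exponential decay. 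It also vanishes at $0$, since $s\varphi\varphi'=O(s^\alpha)$ there and $\alpha>0$. Using $\varphi(0)=1$ and $\varphi(\infty)=0$, we get $\int_0^\infty s\varphi'^2=-\int_0^\infty s\varphi^2+\alpha/2$. Hence $I_\alpha=\alpha/2$, giving the claim for $f\in\mathscr S$. Density of $\mathscr S$ in $L^2$, together with the sublinearity of $g_\alpha$ (by Minkowski, $|g_\alpha f-g_\alpha h|\le g_\alpha(f-h)$), extends the identity to all of $L^2$.

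\emph{Main obstacle.} The delicate step is the endpoint behavior at $s=0$. One must check that $\varphi'$ blows up no faster than $s^{\alpha-1}$, so that the boundary term vanishes and $s\varphi'^2=O(s^{2\alpha-1})$ is integrable near $0$. I would verify this from $(s^{1-\alpha}\varphi')'=s^{1-\alpha}\varphi$: it gives $s^{1-\alpha}\varphi'(s)\to -c$ for some finite constant $c$ as $s\to 0^+$, or one can read the same fact off the expansion $K_\nu(s)\sim\frac12\Gamma(\nu)(s/2)^{-\nu}-\frac12\Gamma(1-\nu)\nu^{-1}(s/2)^{\nu}+\dots$. This finiteness is also what justifies the Tonelli exchange in Step 2.
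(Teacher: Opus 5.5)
Your proof is correct. Steps 1--2 coincide with the paper's Fourier/Plancherel reduction to $A_\alpha=\int_0^\infty s\,(m_\alpha^2+m_\alpha'^2)\,ds$; the two arguments differ only in how this one-variable integral is evaluated.

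The paper identifies $m_\alpha(s)=\frac{2^{1-\alpha/2}}{\Gamma(\alpha/2)}s^{\alpha/2}K_{\alpha/2}(s)$. It then uses the recurrence $(s^\nu K_\nu)'=-s^\nu K_{\nu-1}$, proved in its appendix, to express $m_\alpha'$ through $K_{\alpha/2-1}$. Finally it evaluates $\int_0^\infty s^{\alpha+1}K_{\alpha/2}^2\,ds$ and $\int_0^\infty s^{\alpha+1}K_{\alpha/2-1}^2\,ds$ separately with the Gradshteyn--Ryzhik formula 6.576.4 and sums the Gamma factors.

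You use only the ODE $\varphi''+\frac{1-\alpha}{s}\varphi'-\varphi=0$ inherited from the extension equation, plus one integration by parts based on $(s\varphi')'=s\varphi+\alpha\varphi'$. The constant then appears as the boundary term $\frac{\alpha}{2}\varphi(0)^2$ produced by the first-order term of the ODE. This shows structurally where $\alpha/2$ comes from, and it needs no integral tables or Bessel recurrences. You only need $\varphi(0)=1$, decay at infinity, and the endpoint bound $\varphi'=O(s^{\alpha-1})$, which you derive correctly from $(s^{1-\alpha}\varphi')'=s^{1-\alpha}\varphi$.

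The paper's route yields slightly more: the separate values
$\int_0^\infty s\varphi^2\,ds=\frac{\alpha^2}{2(1+\alpha)}$ and
$\int_0^\infty s\varphi'^2\,ds=\frac{\alpha}{2(1+\alpha)}$.
These show how the $L^2$ mass splits between the spatial gradient and the $t$-derivative, which a single integration by parts cannot see. You can recover the split in your framework by multiplying the ODE by $s^2\varphi'$ and integrating. The boundary terms vanish by the same endpoint bounds, which gives $\int_0^\infty s\varphi^2\,ds=\alpha\int_0^\infty s\varphi'^2\,ds$.

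A small aside: the Tonelli exchange in Step 2 needs no finiteness, since the integrand is nonnegative.
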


Next, we prove that the weak $(1,1)$ bound of $g_\alpha$ is at worst $O(n^2)$. For $\alpha=1$, this improves the previous bound $O(n^3)$ for $g$ \cite[Theorem1.1]{Lai} to $O(n^2)$.

\begin{theorem}[The endpoint estimate]\label{thm:weak11}
	For $0<\alpha \leq 1$, there exists a constant $C_\alpha>0$, independent of $n$, such that for every $f\in L^1(\mathbb R^n)$ and every $\lambda>0$,
	$$
	\lambda \left|\{x\in \mathbb R^n : g_\alpha(f)(x) > \lambda\}\right|
	\lesssim C_\alpha  n^2 \|f\|_{L^1(\mathbb R^n)}.$$
\end{theorem}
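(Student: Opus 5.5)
We treat $g_\alpha$ as a vector-valued Calderón--Zygmund operator and track the dimension dependence of every constant. Set $H=L^2((0,\infty),t\,dt;\mathbb C^{n+1})$ and $K(x)=\big(t\mapsto \nabla_{(x,t)}P^\alpha_t(x)\big)\in H$. Then $g_\alpha(f)(x)=\|K*f(x)\|_H$, and by Theorem \ref{thm:L2} the map $f\mapsto K*f$ is bounded from $L^2(\mathbb R^n)$ to $L^2(\mathbb R^n;H)$ with norm $\sqrt{\alpha/2}\le 1$, independent of $n$. We fix $\lambda>0$ and $f\in L^1$ and take the Calderón--Zygmund decomposition $f=g+\sum_j b_j$ at height $\lambda$, using dyadic cubes $Q_j$. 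This gives $|g|\le 2^n\lambda$, $\int b_j=0$, $\sum|Q_j|\le\|f\|_1/\lambda$. The factor $2^n$ is fatal, so we use instead the decomposition at height $\lambda$ adapted to Euclidean balls via a Vitali/Whitney-type covering. Alternatively, and this is what we commit to, we exploit the fact that the good part only enters through $L^2$: $\|g\|_2^2\le \|g\|_\infty\|f\|_1$. We choose the height $\lambda/ c_n$ with $c_n$ to be optimized, so that the good-part contribution costs $\|g\|_\infty/\lambda$ times $\|f\|_1/\lambda$.

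The key steps, in order, are as follows.

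(i) Good part: by Chebyshev and Theorem \ref{thm:L2}, $|\{g_\alpha(g)>\lambda/2\}|\le 4\lambda^{-2}\|g\|_2^2$, which is acceptable once $\|g\|_\infty\lesssim n^2\lambda$. To achieve this we replace cubes by balls: we cover the set $\{M f>\lambda\}$, where $M$ is the centered ball maximal function, using a Besicovitch/Whitney-type decomposition whose overlap constant we must control. An easier route is to use cubes but dilate by $(1+1/n)$ rather than $2$, since $(1+1/n)^n\le e$. Concretely, for cubes the averaging bound $|g|\le 2^n\lambda$ comes from the parent cube, so we instead take a Whitney-type stopping with ratio $(1+1/n)$ side-lengths. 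This gives $|g|\lesssim\lambda$ at the cost of a slightly worse exceptional-set bound. We expect this to cost a factor of $n$.

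(ii) Exceptional set: we remove $\bigcup Q_j^*$, where $Q_j^*$ is the dilate of $Q_j$ by a factor $1+\gamma$, chosen relative to $\sqrt n\,\ell(Q_j)$ (the diameter). Its measure is at most $(\text{dilation})^n\|f\|_1/\lambda$. To keep this $O(n^{\,\cdot})$ we use balls $B_j$ of radius $\sqrt n\,\ell(Q_j)$, enlarged by a factor $(1+1/n)$. A ball of radius $\sqrt n\ell$ has volume comparable to $(2\pi e/n)^{n/2}n^{n/2}\ell^n\approx C^n\ell^n$, which is exponential. Hence we must instead integrate the Hörmander condition directly over the complement of the $(1+\gamma)$-dilated cube; this is the real point.

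(iii) Bad part (the main obstacle): we show the Hörmander-type estimate
\[
\int_{|x|>R}\|K(x-y)-K(x)\|_H\,dx\lesssim C_\alpha\, n^2\quad\text{for }|y|\le R/(1+1/n)\text{ or similar}.
\]
Writing $K(x-y)-K(x)=\int_0^1 y\cdot\nabla_x K(x-sy)\,ds$, it suffices to bound $|y|\int_{|x|>R'}\|\nabla_xK(x)\|_H\,dx$. We compute $\|\nabla_x K(x)\|_H$ explicitly. The second derivatives of $P_t^\alpha$ are $t^\alpha(t^2+|x|^2)^{-(n+\alpha)/2-1}$ times polynomials whose coefficients are $O(n+\alpha)$ and $O((n+\alpha)^2)$. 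Integrating against $t\,dt$ via the substitution $t=|x|s$ reduces everything to Beta integrals, giving $\|\nabla_xK(x)\|_H\lesssim c_{n,\alpha}\,n^{2}\,B_n|x|^{-n-1}$ with explicit Gamma factors. The product $c_{n,\alpha}\cdot\omega_{n-1}\cdot(\text{Beta})$ must then be shown to be $O(1)$ in $n$ by Stirling; this is where the Gamma-function bookkeeping is delicate and where we expect the work to lie. Then $\int_{|x|>R}|x|^{-n-1}\,dx\cdot\omega_{n-1}=\omega_{n-1}/R$. With $|y|\le R$, each bad piece contributes $\lesssim n^2\|b_j\|_1$, hence $|\{g_\alpha(b)>\lambda/2,\ x\notin\cup Q_j^*\}|\lesssim n^2\|f\|_1/\lambda$. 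The dilation of $Q_j$ needed so that $|y|\le \tfrac12 R$ for points of $Q_j$ measured from its center is $R\sim\sqrt n\ell$. We handle this by integrating over $x$ outside $2Q_j$ (not a ball) and bounding the region using $|x-c_j|\ge \ell/2$ coordinatewise together with $|y|\le\sqrt n\ell/2$. The resulting loss is absorbed into the $n^2$ by using the finer second-order Taylor bound when $|x|\gg|y|$, and the first-order bound $\|K(x-y)\|_H+\|K(x)\|_H$ (each integrable over $|x|>\ell/2$ with an $O(n)$ constant by the same Gamma computations) in the intermediate region.

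Summing the three contributions yields $\lambda|\{g_\alpha f>\lambda\}|\lesssim C_\alpha n^2\|f\|_1$.
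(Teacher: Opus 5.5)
Your outline has the right overall shape: good part via the $L^2$ identity, removal of $(1+1/n)$-dilates of total measure at most $e\|f\|_1/\lambda$, mean-value bounds far from each cube and size bounds near it. But the ingredient that makes a polynomial-in-$n$ constant possible is never supplied. You need a Calder\'on--Zygmund decomposition with $\|g\|_2^2\lesssim\lambda\|f\|_1$ and $\sum_j|Q_j|\lesssim\|f\|_1/\lambda$, with constants independent of $n$, and none of your candidates in step (i) gives it.
\begin{itemize}
\item Running the dyadic decomposition at height $\mu$ costs $2^n\mu/\lambda$ on the good part and $\lambda/\mu$ on the exceptional set, in units of $\|f\|_1/\lambda$. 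The best you can get is $2^{n/2}$.
\item Vitali- or Besicovitch-type ball coverings have constants exponential in $n$.
\item A ``Whitney-type stopping with side ratio $1+1/n$'' does not exist. For $n\ge2$ a cube cannot be tiled by congruent subcubes of side $\ell/(1+1/n)$, so there is no nested family on which a stopping time can bound the averages from above.
\end{itemize}
The paper uses Janakiraman's decomposition (Lemma \ref{lem:CZ}). You bisect a single coordinate at a time, so each cell has exactly twice the measure of its children. The stopping cells are then boxes with sides $a$ or $2a$ and averages in $[\lambda,2\lambda]$. This gives $\|g\|_2^2\le 5\lambda\|f\|_1$, $|E|\le\|f\|_1/\lambda$, and $(1+1/n)$-dilates of measure at most $e|Q_k|$.

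Step (iii) also does not close as written.
\begin{itemize}
\item If you integrate only over the complement of $2Q_j$, then $2Q_j$ must go into the exceptional set at cost $2^n|Q_j|$, which undoes (ii).
\item If you remove only the $(1+1/n)$-dilate, the layer in between must be handled by the size bound. There your claim that $\|K(x-y)\|_H$ and $\|K(x)\|_H$ are integrable over $\{|x|>\ell/2\}$ is false. $\|K(x)\|_H$ is a constant multiple of $|x|^{-n}$ (homogeneity of degree $-n-1$ in $(x,t)$, then the $t\,dt$ norm), so it diverges logarithmically at infinity.
\end{itemize}
What is needed is an explicit split of the region, as in the paper.
\begin{itemize}
\item Far region, $|x-y|>n|y-y_0|$: use the mean value theorem only here. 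In this region $|x-y|$, $|x-y_0|$ and $|x-\xi|$ for $\xi$ on the segment $[y_0,y]$ agree up to factors $1+O(1/n)$. Raising them to powers $n+O(1)$ then costs only an absolute constant.
\item Near region, the complement: use size bounds, which now live on the bounded annulus $\rho/(2n^{3/2})\le|x-y|\le n\rho$, with $\rho=|y-y_0|$. The inner radius comes from having removed the $\operatorname{diam}(Q_k)/(4n^{3/2})$-neighbourhood $Q_k^*\subset(1+1/n)Q_k$, and there $\int|x-y|^{-n}\,dx=\omega_{n-1}\log(2n^{5/2})$.
\end{itemize}
With these two repairs, your Gamma-function bookkeeping produces a bound of order at most $n^2$. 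That bookkeeping uses $c_{n,\alpha}\omega_{n-1}\lesssim C_\alpha n^{\alpha/2}$ together with bounds such as $B(\alpha+1,n+1)\lesssim C_\alpha n^{-(\alpha+1)}$.
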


%The proof relies on a modified Calder\'on--Zygmund decomposition due to Janakiraman \cite{Janakiraman} and careful estimates of the gradient kernel on the exceptional set. The exponent $7/2$ emerges from the geometric properties of the enlarged cubes and the pointwise bounds on the kernel differences.

We determine the precise limiting weak-type behavior as the level parameter tends to zero. This provides a sharp asymptotic for the distribution function of $g_\alpha(f)$ at small scales.
%The constant involves the dimension in a nontrivial way through the product $\prod_{k=1}^{n-1}(k+\alpha)$, which reflects the fractional nature of the kernel.

\begin{theorem}[Limiting weak-type behavior]\label{thm:limit}
	For $0<\alpha  \leq 1$ and $f\in L^1(\mathbb R^n)$, we have
	\[
	\lim_{\lambda \to 0^+} \lambda \left|\{x\in \mathbb R^n : g_\alpha(f)(x) > \lambda\}\right|
	= \frac{c_{n,\alpha} \omega_{n-1}}{n}
	\sqrt{\frac{n!}{2\prod_{k=1}^{n-1}(k+\alpha)}}
	\left|\int_{\mathbb R^n} f(x)\,dx\right|,
	\]
	where $\omega_{n-1}= 2\pi^{n/2}/\Gamma(n/2)$ denotes the surface area of the unit sphere in $\mathbb R^n$.
\end{theorem}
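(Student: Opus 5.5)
The plan follows Janakiraman's strategy for limiting weak-type behavior: first compute the ``kernel'' of $g_\alpha$ applied to a point mass, then show that $g_\alpha(f)$ looks like $|\int f|$ times that kernel far from the origin, and finally remove the regularity assumptions on $f$ using Theorem \ref{thm:weak11}.

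\textbf{Step 1: the kernel $G$.} For $x\neq0$ set
\[
G(x)=\Big(\int_0^\infty |\nabla_{x,t}P_t^\alpha(x)|^2\,t\,dt\Big)^{1/2}.
\]
Write $r=|x|$ and $\rho=t^2+r^2$. The derivatives are
\[
\nabla_x P_t^\alpha(x)=-c_{n,\alpha}(n+\alpha)\,t^\alpha x\,\rho^{-\frac{n+\alpha+2}{2}},
\]
\[
\partial_t P_t^\alpha(x)=c_{n,\alpha}\Big(\alpha t^{\alpha-1}\rho^{-\frac{n+\alpha}{2}}-(n+\alpha)t^{\alpha+1}\rho^{-\frac{n+\alpha+2}{2}}\Big).
\]
Substituting $t=rs$ shows $G(x)=A_{n,\alpha}|x|^{-n}$. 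The constant $A_{n,\alpha}$ is $c_{n,\alpha}$ times a finite combination of Beta integrals $\int_0^\infty s^{a}(1+s^2)^{-b}\,ds$. I expect these to collapse to
\[
A_{n,\alpha}=c_{n,\alpha}\sqrt{\frac{n!}{2\prod_{k=1}^{n-1}(k+\alpha)}},
\]
using $\Gamma(n+\alpha)/\Gamma(1+\alpha)=\prod_{k=1}^{n-1}(k+\alpha)$ and the duplication formula. Once this is in hand,
\[
\lambda\,\big|\{A_{n,\alpha}|x|^{-n}>\lambda\}\big|=\lambda\cdot\frac{\omega_{n-1}}{n}\cdot\frac{A_{n,\alpha}}{\lambda}=\frac{A_{n,\alpha}\,\omega_{n-1}}{n},
\]
which is exactly the claimed constant. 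So the theorem reduces to showing that $g_\alpha(f)$ is asymptotically $|\int f|\,G$ in the weak sense.

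\textbf{Step 2: compactly supported bounded $f$.} Take $f\in L^1$ supported in $B(0,R)$ and put $m=\int f$. Since $g_\alpha(f)(x)$ is the $L^2(t\,dt)$ norm of $\int \nabla P_t^\alpha(x-y)f(y)\,dy$, the triangle inequality in $L^2(t\,dt;\mathbb R^{n+1})$ gives
\[
\big|g_\alpha(f)(x)-|m|G(x)\big|\le \int_{B(0,R)}|f(y)|\,\Big\|\nabla P_t^\alpha(x-y)-\nabla P_t^\alpha(x)\Big\|_{L^2(t\,dt)}\,dy .
\]
By the mean value theorem and a second-derivative computation analogous to Step 1, the inner norm is $\le C_{n,\alpha}R|x|^{-n-1}$ when $|x|\ge 2R$. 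Hence
\[
g_\alpha(f)(x)=|m|A_{n,\alpha}|x|^{-n}+O\big(\|f\|_1 R|x|^{-n-1}\big)\quad\text{for }|x|\ge2R.
\]
As $\lambda\to0$ the level set $\{g_\alpha f>\lambda\}$ consists of three pieces:
\begin{itemize}
\item a part inside $B(0,2R)$, whose measure is bounded, so it contributes $\lambda\cdot O(1)\to0$;
\item a set sandwiched between the balls $\{|m|A_{n,\alpha}|x|^{-n}>\lambda(1\pm\eta)\}$ outside a radius $r_\eta$ at which the error term is $\le\eta\lambda$ relative to the main term. Since the relevant radii are $\sim\lambda^{-1/n}\to\infty$, the relative error $R/|x|\to0$ there;
\item when $m=0$, only a set where $g_\alpha f\lesssim|x|^{-n-1}$. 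This gives $\lambda|\{\cdot\}|\lesssim\lambda\cdot\lambda^{-n/(n+1)}\to0$.
\end{itemize}
Letting $\eta\to0$ yields the limit for such $f$.

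\textbf{Step 3: density.} For general $f\in L^1$ choose $h$ compactly supported with $\|f-h\|_1<\varepsilon$. Sublinearity gives $g_\alpha f\le g_\alpha h+g_\alpha(f-h)$, so
\[
\{g_\alpha f>\lambda\}\subset\{g_\alpha h>(1-\delta)\lambda\}\cup\{g_\alpha(f-h)>\delta\lambda\}.
\]
Theorem \ref{thm:weak11} bounds $\lambda\,|\{g_\alpha(f-h)>\delta\lambda\}|\le C_\alpha n^2\varepsilon/\delta$ uniformly in $\lambda$. This gives the $\limsup$ bound with $|\int h|/(1-\delta)$. The symmetric inclusion $\{g_\alpha h>(1+\delta)\lambda\}\subset\{g_\alpha f>\lambda\}\cup\{g_\alpha(h-f)>\delta\lambda\}$ gives the $\liminf$ bound. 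Letting $\varepsilon\to0$ and then $\delta\to0$, and noting $\int h\to\int f$, finishes the proof. The dimensional factor $n^2$ is harmless here because $n$ is fixed.

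\textbf{Main obstacle.} I expect the main difficulty to be the explicit evaluation in Step 1. The cross terms in $|\partial_tP|^2$ must combine with $|\nabla_xP|^2$ into the single clean product; a useful check is to first verify the case $n=1$, $\alpha=1$. The second-derivative bound in Step 2 is routine by comparison.
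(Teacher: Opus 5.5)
Your proposal is correct and follows essentially the paper's route. You compute the point-mass kernel $J_\alpha(x)=c_{n,\alpha}|x|^{-n}\sqrt{n!/(2\prod_{k=1}^{n-1}(k+\alpha))}$, compare $g_\alpha$ of a compactly supported piece with $|\int f|\,J_\alpha$ via Minkowski's inequality in $L^2(t\,dt)$, and remove the remainder with the weak $(1,1)$ bound of Theorem~\ref{thm:weak11}; your expected constant is right, since after $t=|x|s$ the Beta integrals sum to $\frac{\alpha n}{2}B(\alpha,n)=\frac{\Gamma(\alpha+1)\,n!}{2\Gamma(n+\alpha)}$ and no duplication formula is needed. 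The differences are cosmetic: you work at radii of order $\lambda^{-1/n}$ instead of rescaling to $f_s$ at level $1$, and you use a pointwise $O(R|x|^{-n-1})$ error bound where the paper applies Chebyshev's inequality to the exceptional set $G_s$.
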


%\begin{corollary}\label{c1}
%	For $0<\alpha  \leq 1$ and $f\in L^1(\mathbb R^n)$, we have
%	\begin{align*}
%		\inf\sup_{\lambda>0}\lambda\left|\{x\in \mathbb{R}^n : \mathfrak{g}_\alpha(f)(x) > \lambda\}\right|
%		\geq c_{n,\alpha} \omega_{n-1} \sqrt{\frac{n!}{2\prod_{k=1}^{n-1}(k+\alpha)}},
%	\end{align*}
%	where the infimum is taken over all functions $f\in L^1(\mathbb R^n)$ with $\|f\|_{L^1(\mathbb R^n)}=1$.
%\end{corollary}
\begin{remark}
It is known from the work of Stein \cite{Stein70,Stein86} that, for $1<p<\infty,$ the $L^p$ bound of the classical Littlewood--Paley $g$ function is independent of the underlying dimension. In contrast to the Poisson semigroup $\{P_t\}_{t\geq 0}$,
$\{P_t^\alpha \}_{t\geq 0}$ does not satisfy the semigroup law, and hence the standard methods do not readily extend to fractional square functions. The main observation of this paper is instead a positive representation of $P_t^\alpha$ as an average of classical Poisson kernels at dilated scales. Because the averaging measure is a probability measure supported on $[1,\infty)$ and is independent of $n$, the full-gradient square function is pointwise dominated by the classical one.
\end{remark}
\begin{remark}
When $\alpha=1$, $g_\alpha$ reduces to the classical Littlewood--Paley $g$-function. The $L^2$-norm of $g$ is  $\sqrt{\frac{1}{2}}$, which can be obtained directly via the Fourier transform, thanks to the well-behaved nature of the Poisson kernel $P_t$, we refer to \cite[Chapter 7]{Lin} for a detailed discussion.
\end{remark}
\begin{remark}\label{rem:weakbounds}
 Theorem \ref{thm:weak11} and Theorem \ref{thm:limit}  provide, respectively, upper and lower estimates for the weak (1,1) constant of $g_\alpha$ in terms of the dimension $n$, that is
$$\frac{c_{n,\alpha} \omega_{n-1}}{n}\sqrt{\frac{n!}{2\prod_{k=1}^{n-1}(k+\alpha)}}\lesssim ||g_\alpha||_{w(1,1)}\lesssim C_\alpha n^2.$$

%Specifically, Theorem \ref{thm:weak11} provides the uniform upper bound that grows at most like $c_{n,\alpha} \omega_{n-1} n^{5/2}$. On the other hand, Corollary \ref{c1} shows the lower bound is at least $c_{n,\alpha} \omega_{n-1} \sqrt{\frac{n!}{2\prod_{k=1}^{n-1}(k+\alpha)}}$.
\end{remark}

The organization of the paper is as follows. In Section 2, we establish dimension-free \(L^p\) bounds for \(g_\alpha\) for every \(1<p<\infty\) and identify its exact \(L^2\) norm as \(\sqrt{\alpha/2}\). In Section 3, we prove Theorem\ref{thm:weak11}  via the Calder\'on-Zygmund decomposition and detailed kernel estimates. In Section 4, we prove Theorem \ref{thm:limit}  by a scaling argument combined with sharp pointwise asymptotics of the gradient kernel. Finally, we give the computation of the derivative of the modified Bessel function of the second kind, $K_\nu(z)$ , in the appendix.

Throughout the paper,  $A \lesssim B$ means $A \leq C B$ for some constant $C>0$, and $|E|$ denotes the Lebesgue measure of a measurable set $E\subset \mathbb R^n$.

\section{Dimension-free estimates on $L^p(\mathbb R^n)$ }
In this section, we show that the $L^p(1<p<\infty)$  bound of $g_\alpha$ is independent of the dimension $n$ and  we obtain the $L^2$ identities with explicit constant.

%The proof of Theorem~\ref{thm:main} is completely explicit once \eqref{eq:stein-classical} is granted. The key identity is
%\begin{equation}\label{eq:intro-mixture}
%P_t^\alpha(x)
%=\int_1^\infty P_{rt}^1(x)\,d\mu_\alpha(r),
%\qquad 0<\alpha<1,
%\end{equation}
%where
%\begin{equation}\label{eq:intro-mu}
%d\mu_\alpha(r)
%=\frac{2\sqrt\pi}{\Gamma(\alpha/2)\Gamma((1-\alpha)/2)}
%(r^2-1)^{-(1+\alpha)/2}\,d r,
%\qquad r>1.
%\end{equation}
%We prove directly that $\mu_\alpha$ is a probability measure and verify \eqref{eq:intro-mixture} by a beta-integral computation.

We write the classical Poisson kernel as
\begin{equation}\label{eq:classical-poisson}
\mathbb P_s(x):=P_s^1(x)
=c_{n,1}\frac{s}{(s^2+|x|^2)^{(n+1)/2}},
\qquad
c_{n,1}=\frac{\Gamma((n+1)/2)}{\pi^{(n+1)/2}}.
\end{equation}
For $0<\alpha<1$, define
\begin{equation}\label{eq:kappa-alpha}
\kappa_\alpha
:=\frac{2\sqrt\pi}{\Gamma(\alpha/2)\Gamma((1-\alpha)/2)}
\end{equation}
and
\begin{equation}\label{eq:mu-alpha}
d\mu_\alpha(r)
:=\kappa_\alpha(r^2-1)^{-(1+\alpha)/2}\,d r,
\qquad r>1.
\end{equation}

\begin{lemma}\label{lem:probability}
For every $0<\alpha<1$, the measure $\mu_\alpha$ defined by \eqref{eq:mu-alpha} is a probability measure on $(1,\infty)$.
\end{lemma}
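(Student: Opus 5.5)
We need to show that $\mu_\alpha$ is nonnegative, which is clear since $\kappa_\alpha>0$ for $0<\alpha<1$: both $\Gamma(\alpha/2)$ and $\Gamma((1-\alpha)/2)$ have positive arguments. We also need $\mu_\alpha$ to have total mass one, i.e.
\[
\int_1^\infty (r^2-1)^{-(1+\alpha)/2}\,dr=\frac{\Gamma(\alpha/2)\Gamma((1-\alpha)/2)}{2\sqrt\pi}.
\]
The plan is to reduce the integral to a Beta integral by a change of variables. Substitute $r=u^{-1/2}$, so $u=r^{-2}\in(0,1)$ and $dr=-\tfrac12 u^{-3/2}\,du$. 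Then
\[
r^2-1=\frac{1-u}{u}, \qquad (r^2-1)^{-(1+\alpha)/2}=u^{(1+\alpha)/2}(1-u)^{-(1+\alpha)/2}.
\]
The integral therefore becomes
\[
\tfrac12\int_0^1 u^{(1+\alpha)/2-3/2}(1-u)^{-(1+\alpha)/2}\,du=\tfrac12\int_0^1 u^{\alpha/2-1}(1-u)^{(1-\alpha)/2-1}\,du=\tfrac12 B\!\left(\tfrac{\alpha}{2},\tfrac{1-\alpha}{2}\right).
\]

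Next check convergence. Both Beta parameters, $\alpha/2$ and $(1-\alpha)/2$, are positive exactly when $0<\alpha<1$. This matches the original integral: near $r=1$ the integrand behaves like $(r-1)^{-(1+\alpha)/2}$, whose exponent is less than $1$, and at infinity it decays like $r^{-1-\alpha}$, which is integrable.

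Finally, use $B(a,b)=\Gamma(a)\Gamma(b)/\Gamma(a+b)$. Here $a+b=1/2$ and $\Gamma(1/2)=\sqrt\pi$, so the integral equals
\[
\frac{\Gamma(\alpha/2)\Gamma((1-\alpha)/2)}{2\sqrt\pi}=\kappa_\alpha^{-1}.
\]
Hence $\mu_\alpha((1,\infty))=1$. There is no real obstacle; the only care needed is tracking the exponents in the substitution.
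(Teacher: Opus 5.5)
Your proof is correct and follows the paper's argument exactly: the substitution $u=r^{-2}$ turns the total mass into $\tfrac12 B(\alpha/2,(1-\alpha)/2)=\Gamma(\alpha/2)\Gamma((1-\alpha)/2)/(2\sqrt\pi)=\kappa_\alpha^{-1}$. You also check explicitly that $\kappa_\alpha>0$ and that the integral converges at both endpoints, which the paper leaves implicit.
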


\begin{proof}
Set $s=r^{-2}$. Then $r=s^{-1/2}$ and
\[
d r=-\frac12s^{-3/2}\,d s.
\]
Moreover,
\[
r^2-1=\frac{1-s}{s}.
\]
Therefore
\begin{align*}
\int_1^\infty (r^2-1)^{-(1+\alpha)/2}\,d r
&=\frac12\int_0^1
s^{\alpha/2-1}(1-s)^{(1-\alpha)/2-1}\,d s\\
&=\frac12 B\!\left(\frac\alpha2,\frac{1-\alpha}{2}\right)\\
&=\frac12\frac{\Gamma(\alpha/2)\Gamma((1-\alpha)/2)}{\Gamma(1/2)}.
\end{align*}
Since $\Gamma(1/2)=\sqrt\pi$, multiplication by \eqref{eq:kappa-alpha} gives
\[
\int_1^\infty d \mu_\alpha(r)=1.
\]
\end{proof}

\begin{lemma}[Positive Poisson representation]\label{lem:mixture}
Let $0<\alpha<1$. Then, for every $x\in \mathbb R^n$ and $t>0$,
\begin{equation}\label{eq:mixture-kernel}
P_t^\alpha(x)=\int_1^\infty \mathbb P_{rt}(x)\,d\mu_\alpha(r).
\end{equation}
Consequently, for every $f\in \mathscr{S}(\mathbb R^n)$,
\begin{equation}\label{eq:mixture-function}
P_t^\alpha*f(x)
=\int_1^\infty \mathbb P_{rt}*f(x)\,d\mu_\alpha(r).
\end{equation}
\end{lemma}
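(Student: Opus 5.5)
The plan is to verify \eqref{eq:mixture-kernel} by a direct computation that reduces the $r$-integral to a Beta integral, in the same spirit as Lemma \ref{lem:probability}. Fix $x\in\mathbb R^n$, $t>0$, and write $\rho^2:=t^2+|x|^2$. Substituting \eqref{eq:classical-poisson} and \eqref{eq:mu-alpha}, the right-hand side of \eqref{eq:mixture-kernel} is
\[
c_{n,1}\kappa_\alpha\, t\int_1^\infty \frac{r\,(r^2-1)^{-(1+\alpha)/2}}{(r^2t^2+|x|^2)^{(n+1)/2}}\,dr .
\]
First I substitute $u=r^2-1$, so that $r\,dr=\tfrac12\,du$ and $r^2t^2+|x|^2=t^2u+\rho^2$. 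This turns the integral into
\[
\frac{c_{n,1}\kappa_\alpha t}{2}\int_0^\infty u^{-(1+\alpha)/2}\,(t^2u+\rho^2)^{-(n+1)/2}\,du .
\]

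Next I rescale with $u=\rho^2v/t^2$ to pull out all dependence on $t$ and $\rho$. This produces the prefactor $t\cdot(\rho^2/t^2)^{(1-\alpha)/2}\rho^{-(n+1)}=t^\alpha\rho^{-(n+\alpha)}$, which is exactly the shape of $P_t^\alpha(x)$. What remains is the pure integral $\int_0^\infty v^{-(1+\alpha)/2}(1+v)^{-(n+1)/2}\,dv$. This is the Beta integral
\[
B\!\left(\tfrac{1-\alpha}{2},\tfrac{n+\alpha}{2}\right)=\frac{\Gamma(\frac{1-\alpha}{2})\,\Gamma(\frac{n+\alpha}{2})}{\Gamma(\frac{n+1}{2})}.
\]
It converges precisely because $0<\alpha<1$: near $0$ the exponent $-(1+\alpha)/2>-1$, and at infinity the total decay exponent is $(n+1)/2+(1+\alpha)/2-1>1$.

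The only step needing care is the bookkeeping of constants, which I expect to be the main (mild) obstacle. I will check that
\[
\frac{c_{n,1}\kappa_\alpha}{2}\cdot\frac{\Gamma(\frac{1-\alpha}{2})\Gamma(\frac{n+\alpha}{2})}{\Gamma(\frac{n+1}{2})}
=\frac{\Gamma(\frac{n+1}{2})}{\pi^{(n+1)/2}}\cdot\frac{\sqrt\pi}{\Gamma(\frac\alpha2)\Gamma(\frac{1-\alpha}2)}\cdot\frac{\Gamma(\frac{1-\alpha}{2})\Gamma(\frac{n+\alpha}{2})}{\Gamma(\frac{n+1}{2})}
=\frac{\Gamma(\frac{n+\alpha}{2})}{\pi^{n/2}\Gamma(\frac\alpha2)}=c_{n,\alpha}.
\]
This gives \eqref{eq:mixture-kernel}.

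For \eqref{eq:mixture-function}, I convolve \eqref{eq:mixture-kernel} with $f\in\mathscr S(\mathbb R^n)$ and interchange the $y$- and $r$-integrations by Fubini. The integrand $\mathbb P_{rt}(x-y)|f(y)|$ is nonnegative, so Tonelli applies. It gives
\[
\int_1^\infty\!\!\int_{\mathbb R^n}\mathbb P_{rt}(x-y)|f(y)|\,dy\,d\mu_\alpha(r)\le\|f\|_\infty\int_1^\infty d\mu_\alpha=\|f\|_\infty<\infty,
\]
using $\|\mathbb P_s\|_{L^1}=1$ and Lemma \ref{lem:probability}. Absolute integrability is therefore guaranteed, and the interchange yields \eqref{eq:mixture-function}.
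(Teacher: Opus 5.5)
Your proof is correct and is essentially the paper's argument: the substitution $u=r^2-1$, reduction to the Beta integral $B\bigl(\tfrac{1-\alpha}{2},\tfrac{n+\alpha}{2}\bigr)$, the same constant bookkeeping giving $c_{n,\alpha}$, and Tonelli/Fubini for the convolution. The paper normalizes via $\rho=|x|/t$ and quotes $\int_0^\infty y^{a-1}(y+A)^{-b}\,dy=A^{a-b}B(a,b-a)$ instead of rescaling, which is the same computation, and your bound by $\|f\|_\infty$ makes explicit what the paper only asserts.

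One small slip: at infinity the integrand decays like $v^{-(n+2+\alpha)/2}$. The correct condition is therefore $(n+1)/2+(1+\alpha)/2>1$, equivalently $(n+\alpha)/2>0$. What you wrote, $(n+1)/2+(1+\alpha)/2-1>1$, is false for $n=1$.
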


\begin{proof}
Let $\rho=|x|/t$. By \eqref{eq:classical-poisson} and \eqref{eq:mu-alpha}, we obtain that
\begin{align*}
\int_1^\infty \mathbb P_{rt}(x)\,d\mu_\alpha(r)
&=\kappa_\alpha c_{n,1}
\int_1^\infty
\frac{rt}{(r^2t^2+|x|^2)^{(n+1)/2}}
(r^2-1)^{-(1+\alpha)/2}\,d r\notag\\
&=\kappa_\alpha c_{n,1}t^{-n}
\int_1^\infty
\frac{r(r^2-1)^{-(1+\alpha)/2}}
{(r^2+\rho^2)^{(n+1)/2}}\,d r.
\label{eq:mixture-proof1}
\end{align*}
Put $y=r^2-1$, so that $r\,d r=\frac12d y$. Then we have
\begin{equation*}\label{eq:mixture-proof2}
\int_1^\infty \mathbb P_{rt}(x)\,d\mu_\alpha(r)=\frac{\kappa_\alpha c_{n,1}}2t^{-n}
\int_0^\infty
y^{-(1+\alpha)/2}
(y+1+\rho^2)^{-(n+1)/2}\,d y.
\end{equation*}
Set
\[
a=\frac{1-\alpha}{2},\qquad b=\frac{n+1}{2},\qquad A=1+\rho^2.
\]
Since $0<a<b$, the standard beta-integral identity
\begin{equation*}\label{eq:beta-integral}
\int_0^\infty y^{a-1}(y+A)^{-b}\,d y
=A^{a-b}B(a,b-a)
\end{equation*}
gives
\begin{align*}
\int_1^\infty \mathbb P_{rt}(x)\,d\mu_\alpha(r)=\frac{\kappa_\alpha c_{n,1}}2t^{-n}
(1+\rho^2)^{-(n+\alpha)/2}
\frac{\Gamma((1-\alpha)/2)\Gamma((n+\alpha)/2)}{\Gamma((n+1)/2)}.
\label{eq:mixture-proof3}
\end{align*}
Using the definitions of $\kappa_\alpha$ and $c_{n,1}$,
\begin{align*}
\frac{\kappa_\alpha c_{n,1}}2
\frac{\Gamma((1-\alpha)/2)\Gamma((n+\alpha)/2)}{\Gamma((n+1)/2)}
&=\frac{\Gamma((n+\alpha)/2)}{\pi^{n/2}\Gamma(\alpha/2)}\\
&=c_{n,\alpha}.
\end{align*}
Finally,
\[
t^{-n}\left(1+\frac{|x|^2}{t^2}\right)^{-(n+\alpha)/2}
=\frac{t^\alpha}{(t^2+|x|^2)^{(n+\alpha)/2}}.
\]
This proves \eqref{eq:mixture-kernel}. Formula \eqref{eq:mixture-function} follows by Fubini's theorem; for $f\in \mathscr{S}(\mathbb R^n)$ all integrals are absolutely convergent.
\end{proof}

\begin{lemma}[Point-wise estimate]\label{lemm:pointwise}
For every $0<\alpha<1$, $f\in \mathscr{S}(\mathbb{R}^n)$, and $x\in \mathbb{R}^n$,
\begin{equation}\label{eq:pointwise}
g_\alpha(f)(x)\le g(f)(x),
\end{equation}
whenever $g(f)(x)<\infty$. For $\alpha=1$, equality holds.
\end{lemma}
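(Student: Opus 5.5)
We use Lemma \ref{lem:mixture} to write $u_\alpha(x,t):=P_t^\alpha*f(x)$ as an average of classical Poisson extensions. Then the square function of the average is controlled by the average of square functions, by Minkowski's inequality, and a change of variables $s=rt$ in each term removes the dependence on $r$. Throughout, write $u(x,s)=\mathbb P_s*f(x)$ for the classical Poisson extension, so that $g(f)(x)^2=\int_0^\infty|\nabla u(x,s)|^2 s\,ds$.

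First we differentiate \eqref{eq:mixture-function} under the integral sign. Here $\nabla$ is the full space-time gradient $(\nabla_x,\partial_t)$. For $f\in\mathscr S(\mathbb R^n)$ and fixed $t>0$, we have the bounds $|\nabla_x u(x,rt)|\le \|\nabla f\|_\infty$ and $|\partial_s u(x,s)|\lesssim s^{-1}\|f\|_\infty$. The weight $r\,(r^2-1)^{-(1+\alpha)/2}$ is integrable near $r=1$, and it behaves like $r^{-\alpha}$ at infinity, so $r\cdot(rt)^{-1}(r^2-1)^{-(1+\alpha)/2}$ is integrable on $(1,\infty)$. Dominated convergence therefore justifies
\[
\nabla_x u_\alpha(x,t)=\int_1^\infty \nabla_x u(x,rt)\,d\mu_\alpha(r),\qquad
\partial_t u_\alpha(x,t)=\int_1^\infty r\,(\partial_s u)(x,rt)\,d\mu_\alpha(r).
\]
Next, we set $G(x,s):=(\nabla_x u(x,s),\partial_s u(x,s))$. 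From the two formulas, $|\nabla u_\alpha(x,t)|\le \int_1^\infty r\,|G(x,rt)|\,d\mu_\alpha(r)$, using $r\ge1$ to put the factor $r$ on the $x$-part as well. This is the step where we must be careful: the extra factor $r$ on the time derivative must be absorbed, and otherwise the argument is lost.

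Now we apply Minkowski's integral inequality in $L^2((0,\infty),t\,dt)$:
\[
g_\alpha(f)(x)\le \int_1^\infty r\Bigl(\int_0^\infty |G(x,rt)|^2\,t\,dt\Bigr)^{1/2}d\mu_\alpha(r).
\]
The substitution $s=rt$ gives $t\,dt=r^{-2}s\,ds$. The inner integral therefore equals $r^{-1}g(f)(x)$, and the factor $r$ cancels exactly. This is why the full gradient, with weight $t\,dt$, is precisely scale-compatible. We obtain $g_\alpha(f)(x)\le \int_1^\infty g(f)(x)\,d\mu_\alpha(r)=g(f)(x)$ by Lemma \ref{lem:probability}. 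This holds whenever $g(f)(x)<\infty$; if $g(f)(x)=\infty$ the statement is vacuous.

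For $\alpha=1$, the kernel $P_t^1$ equals $\mathbb P_t$, so $g_1=g$ by definition and equality is trivial. The main obstacle is the chain-rule factor $r$ in $\partial_t$. We handle it by noting that the scaling of the measure $t\,dt$ produces exactly $r^{-1}$, so no loss occurs; bounding $|\nabla_x|$ by $r|\nabla_x|$ costs nothing. The justification of differentiation under the integral, via the integrability of $r^{-\alpha}$ tails against $r^{-1}$ decay, is routine.
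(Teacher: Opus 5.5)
Your proof is correct and follows essentially the paper's route. You use the mixture representation of Lemma~\ref{lem:mixture}, differentiate under the integral (which you justify more carefully than the paper), apply Minkowski's integral inequality, and then use the substitution $s=rt$ together with Lemma~\ref{lem:probability}. The only cosmetic difference is where the chain-rule factor is absorbed: you bound the spatial part by $r|G(x,rt)|$ before applying Minkowski to a nonnegative scalar integrand, so you need no truncation to $(\varepsilon,R)$. The paper instead keeps the vector structure in $L^2((\varepsilon,R),dt/t;\mathbb{C}^{n+1})$ and discards the factor $r^{-2}\le 1$ on the spatial part after the change of variables.
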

\begin{proof}
 The argument is written first for truncated square functions so that all applications of Minkowski's inequality are immediate.

For $0<\varepsilon<R<\infty$, define
\begin{equation*}\label{eq:truncated-g}
g_{\alpha;\varepsilon,R}(f)(x)
:=\left(\int_\varepsilon^R
|\nabla_{x,t}u_\alpha(x,t)|^2\,t\,d t\right)^{1/2}.
\end{equation*}
Let
\begin{equation*}\label{eq:v-def}
v(x,s):=\mathbb P_s*f(x).
\end{equation*}
By Lemma~\ref{lem:mixture}, for $0<\alpha<1$,
\begin{equation*}\label{eq:u-average-v}
u_\alpha(x,t)=\int_1^\infty v(x,rt)\,d\mu_\alpha(r).
\end{equation*}
For the Schwartz class of functions, differentiation under the integral is justified and gives
\begin{equation}\label{eq:derivatives-average}
\nabla_xu_\alpha(x,t)
=\int_1^\infty \nabla_xv(x,rt)\,d\mu_\alpha(r),
\qquad
\partial_tu_\alpha(x,t)
=\int_1^\infty r\,\partial_sv(x,rt)\,d\mu_\alpha(r).
\end{equation}

Using $t\,d t=\frac{d t}{t}\,t^2$, we may write
\begin{equation*}\label{eq:g-hilbert}
g_{\alpha;\varepsilon,R}(f)(x)
=\left(\int_\varepsilon^R
|t\nabla_{x,t}u_\alpha(x,t)|^2\frac{d t}{t}\right)^{1/2}.
\end{equation*}
By \eqref{eq:derivatives-average} and Minkowski's integral inequality in the Hilbert space
\[
L^2\!\left((\varepsilon,R),\frac{d t}{t};\mathbb{C}^{n+1}\right),
\]
we obtain
\begin{align}
g_{\alpha;\varepsilon,R}(f)(x)
&\le\int_1^\infty
\Bigg[\int_\varepsilon^R
\left(
|t\nabla_xv(x,rt)|^2
+|rt\,\partial_sv(x,rt)|^2
\right)\frac{d t}{t}\Bigg]^{1/2}
d\mu_\alpha(r).
\label{eq:minkowski}
\end{align}
Fix $r\ge1$ and set $s=rt$. Then $d t/t=d s/s$ and $t=s/r$. Hence the square of the bracketed quantity in \eqref{eq:minkowski} is
\begin{equation*}\label{eq:scaled-bracket}
\int_{r\varepsilon}^{rR}
\left(
\frac1{r^2}|s\nabla_xv(x,s)|^2
+|s\partial_sv(x,s)|^2
\right)\frac{d s}{s}.
\end{equation*}
Because $r\ge1$, we have $r^{-2}\le1$. Therefore

	\begin{align}\label{eq:bracket-by-g1}
	&\int_{r\varepsilon}^{rR}
\left(
\frac1{r^2}|s\nabla_xv(x,s)|^2
+|s\partial_sv(x,s)|^2
\right)\frac{d s}{s}\\
&\le\int_0^\infty
\left(
|s\nabla_xv(x,s)|^2
+|s\partial_sv(x,s)|^2
\right)\frac{d s}{s}\notag\\
&=\int_0^\infty |\nabla_{x,s}v(x,s)|^2\,s\,d s
=g(f)(x)^2.\notag
	\end{align}

%\begin{align}\label{eq:bracket-by-g1}
%&\int_{r\varepsilon}^{rR}
%\left(
%\frac1{r^2}|s\nabla_xv(x,s)|^2
%+|s\partial_sv(x,s)|^2
%\right)\frac{d s}{s}\\
%&\le\int_0^\infty
%\left(
%|s\nabla_xv(x,s)|^2
%+|s\partial_sv(x,s)|^2
%\right)\frac{d s}{s}\notag\\
%&=\int_0^\infty |\nabla_{x,s}v(x,s)|^2\,s\,d s
%=g(f)(x)^2.
%\end{align}
Combining \eqref{eq:minkowski} and \eqref{eq:bracket-by-g1}, and using Lemma~\ref{lem:probability},
\[
g_{\alpha;\varepsilon,R}(f)(x)
\le\int_1^\infty g(f)(x)\,d\mu_\alpha(r)
=g(f)(x).
\]
Letting $\varepsilon\downarrow0$ and $R\uparrow\infty$, monotone convergence applied to the nonnegative square integrals yields \eqref{eq:pointwise}. If $\alpha=1$, then $u_\alpha=v$ and there is equality by definition.

\end{proof}

\begin{proof}[Proof of Theorem~\ref{thm:main}]

For every \(0<\alpha<1\) and \(f\in \mathscr{S}(\mathbb{R}^n)\), Lemma~\ref{lemm:pointwise} yields the point-wise estimate

$$
g_\alpha(f)(x)\leq g(f)(x).
$$

 Hence, by the classical dimension-free estimate \eqref{eq:stein-classical},

$$
\|g_\alpha(f)\|_{L^p(\mathbb{R}^n)}
\leq \|g(f)\|_{L^p(\mathbb{R}^n)}
\leq C_p\|f\|_{L^p(\mathbb{R}^n)},
$$

where \(C_p\) is independent of both $n$ and $\alpha$. The estimate extends to all \(f\in L^p(\mathbb{R}^n)\) by density.

\end{proof}

To prove Theorem \ref{thm:L2}, we will make use of standard formulas for Bessel functions, which can be found in the reference book by Gradshteyn and Ryzhik \cite{GR}.
\begin{lemma}\label{lemma1}\cite[p.368 3.471.9]{GR}
Let $\nu$ be an arbitrary complex number, and let $\beta$ and $\gamma$ be complex parameters such that  \textbf{\textsf{Re}} \,$\beta>0,$ \textbf{\textsf{Re}}\, $\gamma>0$. Then
\[
\int_0^\infty s^{\nu-1} e^{-\beta s - \gamma/s}\,ds
= 2\left(\frac{\gamma}{\beta}\right)^{\nu/2} K_\nu(2\sqrt{\beta\gamma}),
\qquad
\]
where $K_\nu(z)= \frac{\pi i}{2}\,e^{\frac{\pi}{2}\nu i}\; H_\nu^{(1)}\!\bigl(z e^{\frac{\pi}{2}i}\bigr)$ denotes the modified Bessel function of the second kind(see Lemma \ref{derivative} in the appendix for the definition).

\end{lemma}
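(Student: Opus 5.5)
We will reduce the identity to the standard Schläfli-type integral representation of the Macdonald function,
\begin{equation*}
K_\nu(z)=\frac12\int_0^\infty u^{\nu-1}\exp\Big(-\frac z2\big(u+u^{-1}\big)\Big)\,du,\qquad \Re z>0,
\end{equation*}
valid for every complex $\nu$. The integral converges absolutely for every $\nu$: for $\Re z>0$ the exponential factor decays like $e^{-c/u}$ as $u\to0$ and like $e^{-cu}$ as $u\to\infty$.

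We first treat real parameters $\beta,\gamma>0$ and make the substitution $s=\sqrt{\gamma/\beta}\,u$. Then
\begin{equation*}
\beta s+\frac{\gamma}{s}=\sqrt{\beta\gamma}\,\big(u+u^{-1}\big),\qquad s^{\nu-1}\,ds=\Big(\frac{\gamma}{\beta}\Big)^{\nu/2}u^{\nu-1}\,du .
\end{equation*}
Hence
\begin{equation*}
\int_0^\infty s^{\nu-1}e^{-\beta s-\gamma/s}\,ds=\Big(\frac{\gamma}{\beta}\Big)^{\nu/2}\int_0^\infty u^{\nu-1}e^{-\sqrt{\beta\gamma}\,(u+1/u)}\,du .
\end{equation*}
Applying the representation with $z=2\sqrt{\beta\gamma}$ turns the right-hand side into $2(\gamma/\beta)^{\nu/2}K_\nu(2\sqrt{\beta\gamma})$, as claimed.

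For complex $\beta,\gamma$ with positive real parts, we use analytic continuation.
\begin{itemize}
\item The left-hand side is holomorphic in $(\beta,\gamma)$ on the product of right half-planes. Indeed, the integrand is entire in the parameters, and the integral converges locally uniformly because $|e^{-\beta s-\gamma/s}|=e^{-s\Re\beta-\Re\gamma/s}$.
\item The right-hand side is also holomorphic there, with principal branches chosen. Since $\Re\beta,\Re\gamma>0$, the arguments of $\beta$ and $\gamma$ lie in $(-\pi/2,\pi/2)$. Thus $\beta\gamma\notin(-\infty,0]$, so $\sqrt{\beta\gamma}$ (principal branch) has positive real part, and $\gamma/\beta$ also avoids the negative axis.
\item $K_\nu$ is holomorphic on $\Re z>0$.
\end{itemize}
The two sides agree on the totally real set $(0,\infty)^2$, so they agree on the whole domain by the identity theorem, applied one variable at a time.

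The main obstacle is justifying the integral representation itself from the Hankel-function definition of $K_\nu$ given in the appendix (Lemma~\ref{derivative}). We would set $F(z)=\frac12\int_0^\infty u^{\nu-1}e^{-\frac z2(u+1/u)}du$ and proceed in three steps.
\begin{enumerate}
\item Show that $F$ solves the modified Bessel equation $z^2F''+zF'-(z^2+\nu^2)F=0$. This follows by differentiating under the integral and integrating by parts in $u$; the boundary terms vanish because of the exponential decay at both ends.
\item Show that $F(z)\sim\sqrt{\pi/(2z)}\,e^{-z}$ as $z\to+\infty$. This follows from Laplace's method at the critical point $u=1$.
\item Conclude that $F=K_\nu$. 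On $(0,\infty)$, every solution of the equation is a combination of $K_\nu$ and $I_\nu$, and $I_\nu$ grows like $e^{z}$, so the decay in step 2 forces $F=cK_\nu$. The classical asymptotic $K_\nu(z)\sim\sqrt{\pi/(2z)}e^{-z}$, which follows from the Hankel asymptotics of $H^{(1)}_\nu$, then gives $c=1$.
\end{enumerate}
Alternatively, since the lemma is quoted from \cite{GR}, one may simply cite the standard representation and keep only the substitution and continuation steps above.
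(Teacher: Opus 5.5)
Your proof is correct. The paper does not prove this lemma; it quotes it from Gradshteyn--Ryzhik. However, its appendix supplies, far more cheaply, the ingredient you single out as the main obstacle.

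The paper takes as its definition the integral formula
$H_\nu^{(1)}(z)=-\frac{i}{\pi}e^{-\frac12 i\nu\pi}\int_0^\infty \exp\left(\frac12 iz(t+t^{-1})\right)t^{-\nu-1}\,dt$.
Evaluating at $iz$ with $\Re z>0$, the phase factors cancel ($\frac{\pi i}{2}\cdot(-\frac{i}{\pi})=\frac12$). This gives, in one line,
$K_\nu(z)=\frac12\int_0^\infty e^{-\frac z2(t+t^{-1})}t^{-\nu-1}\,dt$,
which is the paper's formula (KVZ). The change of variables $t\mapsto 1/t$ turns this into your representation with $u^{\nu-1}$. Combined with your substitution $s=\sqrt{\gamma/\beta}\,u$, this is the natural proof within the paper's framework.

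Your three-step argument (Bessel equation, Laplace asymptotics at $u=1$, exclusion of $I_\nu$ by growth) is valid, and you rightly only need it for real $z>0$. But it imports the standard theory of Hankel functions: that $H^{(1)}_\nu$ solves Bessel's equation, its large-argument asymptotics, and the independence of $I_\nu$ and $K_\nu$. It is therefore only worthwhile if $K_\nu$ is defined without an integral formula.

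Your analytic-continuation step in $(\beta,\gamma)$ is something the paper does not address. It uses principal branches of $(\gamma/\beta)^{\nu/2}$ and $\sqrt{\beta\gamma}$ and applies the identity theorem one variable at a time. A direct complex substitution would instead require a contour rotation. The paper only ever applies the lemma with $\beta=t^2>0$ and $\gamma=\pi^2|\xi|^2>0$, so for its purposes the real case suffices.
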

\begin{lemma}\label{lemma2} \cite[p.684 6.576.4]{GR}
Let $\lambda$ and $\nu$ be complex parameters such that $\textbf{\textsf{Re}} \, \lambda > 2|\textbf{\textsf{Re}}  \,\nu|$. Then
\[
\int_0^\infty s^{\lambda-1} K_\nu(s)^2\,ds
= \frac{2^{\lambda-3}}{\Gamma(\lambda)}
\Gamma\left(\frac{\lambda}{2}+\nu\right)
\Gamma\left(\frac{\lambda}{2}-\nu\right)
\Gamma\left(\frac{\lambda}{2}\right)^2.
\]

\end{lemma}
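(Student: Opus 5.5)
The plan is to prove the formula by a direct computation. We write $K_\nu$ through an exponential integral representation, which comes from Lemma~\ref{lemma1}. The Mellin integral in $s$ then becomes a Gamma function, and the remaining double integral factors into two beta integrals after a linear change of variables. It suffices to treat real $s>0$, and first real $\lambda,\nu$ with $\lambda>2|\nu|$. The complex case then follows by analytic continuation in $(\lambda,\nu)$: both sides are analytic on the region $\textbf{\textsf{Re}}\,\lambda>2|\textbf{\textsf{Re}}\,\nu|$, where all integrals converge locally uniformly.

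\emph{Step 1 (integral representation).} Apply Lemma~\ref{lemma1} with $\beta=\gamma=s/2$, so that $2\sqrt{\beta\gamma}=s$ and $(\gamma/\beta)^{\nu/2}=1$:
\[
K_\nu(s)=\frac12\int_0^\infty u^{\nu-1}e^{-\frac s2(u+u^{-1})}\,du=\frac12\int_{-\infty}^{\infty}e^{\nu a}e^{-s\cosh a}\,da,
\]
where the second form uses $u=e^a$. Squaring gives
\[
K_\nu(s)^2=\frac14\iint_{\mathbb R^2}e^{\nu(a+b)}e^{-s(\cosh a+\cosh b)}\,da\,db .
\]

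\emph{Step 2 (integrate in $s$ first).} All integrands are positive, so Tonelli's theorem lets us integrate in $s$ first. Using $\int_0^\infty s^{\lambda-1}e^{-sA}\,ds=\Gamma(\lambda)A^{-\lambda}$, we get
\[
\int_0^\infty s^{\lambda-1}K_\nu(s)^2\,ds=\frac{\Gamma(\lambda)}4\iint_{\mathbb R^2}e^{\nu(a+b)}(\cosh a+\cosh b)^{-\lambda}\,da\,db .
\]
Now use $\cosh a+\cosh b=2\cosh p\cosh q$ with $p=(a+b)/2$, $q=(a-b)/2$; the Jacobian is $da\,db=2\,dp\,dq$. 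The double integral then splits as
\[
\frac{\Gamma(\lambda)}{2}\,2^{-\lambda}\Big(\int_{\mathbb R}e^{2\nu p}(\cosh p)^{-\lambda}dp\Big)\Big(\int_{\mathbb R}(\cosh q)^{-\lambda}dq\Big).
\]

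\emph{Step 3 (beta integrals).} Substitute $x=e^{2p}/(1+e^{2p})$. Then $\cosh p=\tfrac12 x^{-1/2}(1-x)^{-1/2}$, $e^{2p}=x/(1-x)$ and $dp=dx/(2x(1-x))$, which gives
\[
\int_{\mathbb R}e^{2\nu p}(\cosh p)^{-\lambda}dp=2^{\lambda-1}B\!\left(\tfrac\lambda2+\nu,\tfrac\lambda2-\nu\right)=2^{\lambda-1}\frac{\Gamma(\frac\lambda2+\nu)\Gamma(\frac\lambda2-\nu)}{\Gamma(\lambda)}.
\]
This integral converges exactly when $\lambda>2|\nu|$, which is where the hypothesis enters. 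Taking $\nu=0$ gives $2^{\lambda-1}\Gamma(\lambda/2)^2/\Gamma(\lambda)$ for the $q$-integral. Multiplying out the constants,
\[
\frac{\Gamma(\lambda)}2\,2^{-\lambda}\cdot\frac{2^{2\lambda-2}}{\Gamma(\lambda)^2}=\frac{2^{\lambda-3}}{\Gamma(\lambda)},
\]
which yields the stated identity.

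The only delicate points are the constant bookkeeping in Steps 2--3 and the justification of Fubini together with the analytic continuation to complex parameters. For real parameters the integrands are positive, so Tonelli's theorem applies directly. For complex parameters the modulus of each integrand is dominated by the integrand at the real parts $(\textbf{\textsf{Re}}\,\lambda,\textbf{\textsf{Re}}\,\nu)$, so Fubini's theorem applies there as well. That same domination gives local uniform convergence, hence analyticity of both sides, and the identity theorem extends the formula from real to complex $(\lambda,\nu)$.
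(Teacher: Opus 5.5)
Your proof is correct, and it takes a different route from the paper, which gives no proof at all: it quotes Gradshteyn--Ryzhik 6.576.4. That table entry is the diagonal case $a=b$, $\mu=\nu$ of a general formula for $\int_0^\infty x^{-\lambda}K_\mu(ax)K_\nu(bx)\,dx$, whose hypergeometric factor collapses to $1$ when $a=b$.

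Your derivation is self-contained. It uses only the representation $K_\nu(s)=\tfrac12\int_{\mathbb R}e^{\nu a}e^{-s\cosh a}\,da$, which follows from Lemma~\ref{lemma1} or equivalently from (\ref{KVZ}) in the appendix. Tonelli, the factorization $\cosh a+\cosh b=2\cosh\frac{a+b}{2}\cosh\frac{a-b}{2}$ and two beta integrals then finish it.

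I checked the steps. The substitution $x=e^{2p}/(1+e^{2p})$ gives $2^{\lambda-1}B(\tfrac\lambda2+\nu,\tfrac\lambda2-\nu)$. The final constant $\tfrac{\Gamma(\lambda)}{2}\,2^{-\lambda}\,(2^{\lambda-1})^2/\Gamma(\lambda)^2=2^{\lambda-3}/\Gamma(\lambda)$ is right. Your argument also shows exactly where the hypothesis $\mathrm{Re}\,\lambda>2|\mathrm{Re}\,\nu|$ is needed: for convergence of the $p$-integral. The citation hides this.

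Two small remarks:
\begin{itemize}
\item The paper only uses the lemma with $\lambda=\alpha+2$ and $\nu\in\{\alpha/2,\ \alpha/2-1\}$. These are real and satisfy $\lambda>2|\nu|$, so your Tonelli case alone already covers the proof of Theorem~\ref{thm:L2}.
\item For complex parameters, your own domination remark makes the analytic continuation step unnecessary. The modulus of the integrand equals the integrand at $(\mathrm{Re}\,\lambda,\mathrm{Re}\,\nu)$, so Fubini applies. The beta-integral identity holds for complex arguments with positive real part. Hence the same computation goes through verbatim, with no identity theorem needed.
\end{itemize}
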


We now give the proof of Theorem  \ref{thm:L2}.
\begin{proof}[Proof of Theorem~\ref{thm:L2}]

We begin by computing the Fourier transform of the fractional Poisson kernel. Using the standard Gamma function integral representation
\[
(t^2 + |x|^2)^{-\frac{n+\alpha}{2}}
= \frac{1}{\Gamma(\frac{n+\alpha}{2})}
\int_0^\infty s^{\frac{n+\alpha}{2}-1} e^{-s(t^2+|x|^2)}\,ds,
\]
together with the Fourier transform of the Gaussian function
\[
\int_{\mathbb R^n} e^{-s|x|^2} e^{-2\pi i x\cdot \xi}\,dx
= \left(\frac{\pi}{s}\right)^{n/2} e^{-\pi^2 |\xi|^2/s},
\]
we obtain, for $\xi\in \mathbb R^n$,
\[
\widehat{P_t^\alpha}(\xi)
= \int_{\mathbb R^n} P_t^\alpha(x) e^{-2\pi i x\cdot \xi}\,dx
= \frac{t^\alpha}{\Gamma(\alpha/2)}
\int_0^\infty s^{\alpha/2 - 1} e^{-t^2 s - \pi^2 |\xi|^2/s}\,ds.
\]

Applying Lemma \ref{lemma1} with $\nu = \alpha/2$, $\beta = t^2$, and $\gamma = \pi^2|\xi|^2$, we get
\[
\widehat{P_t^\alpha}(\xi)
= \frac{2^{1-\alpha/2}}{\Gamma(\alpha/2)} (2\pi t|\xi|)^{\alpha/2} K_{\alpha/2}(2\pi t|\xi|).
\]
%where $K_\nu(z)= \frac{\pi i}{2}\,e^{\frac{\pi}{2}\nu i}\; H_\nu^{(1)}\!\bigl(z e^{\frac{\pi}{2}i}\bigr)$ denotes the modified Bessel function of the second kind(see Lemma \ref{derivative} in the appendix for the definition).

Introducing the multiplier
\[
m_\alpha(s) := \frac{2^{1-\alpha/2}}{\Gamma(\alpha/2)} s^{\alpha/2} K_{\alpha/2}(s).
\]
and setting $s = 2\pi t|\xi|,$ we find that
$$\widehat{P_t^\alpha}(\xi) = m_\alpha(2\pi t|\xi|).$$

Since $u = P_t^\alpha * f$, the Fourier transform of the extension is given by
\[
\widehat{u}(\xi,t) = m_\alpha(2\pi t|\xi|) \widehat{f}(\xi).
\]

Differentiating with respect to the spatial and temporal variables yields
\[
\widehat{\partial_{x_j} u}(\xi,t) = 2\pi i \xi_j \, m_\alpha(2\pi t|\xi|) \widehat{f}(\xi),
\]
and
\[
\widehat{\partial_t u}(\xi,t) = 2\pi |\xi| \, m_\alpha'(2\pi t|\xi|) \widehat{f}(\xi).
\]

By Plancherel's theorem, we have
\[
\int_{\mathbb R^n} |\nabla u(x,t)|^2\,dx
= \int_{\mathbb R^n} (2\pi |\xi|)^2
\left( |m_\alpha(2\pi t|\xi|)|^2 + |m_\alpha'(2\pi t|\xi|)|^2 \right)
|\widehat{f}(\xi)|^2\,d\xi.
\]

Therefore
\begin{align*}
\|g_\alpha(f)\|_{L^2(\mathbb R^n)}^2
= &\int_0^\infty \int_{\mathbb R^n} |\nabla u(x,t)|^2\,dx \, t\,dt\\
= &\int_{\mathbb R^n} |\widehat{f}(\xi)|^2
\left( \int_0^\infty (2\pi|\xi|)^2
\left( |m_\alpha(2\pi t|\xi|)|^2 + |m_\alpha'(2\pi t|\xi|)|^2 \right) t\,dt \right) d\xi.
\end{align*}

Making the change of variables $s = 2\pi t|\xi|$, so that $t\,dt = s\,ds/(2\pi|\xi|)^2$, we obtain

\begin{align*}
\|g_\alpha(f)\|_{L^2(\mathbb R^n)}^2
= &\left( \int_0^\infty s \left( |m_\alpha(s)|^2 + |m_\alpha'(s)|^2 \right) ds \right) \|f\|_2^2\\
:=& A_\alpha \|f\|_2^2,\\
\end{align*}
where
\[
A_\alpha = \int_0^\infty s \left( |m_\alpha(s)|^2 + |m_\alpha'(s)|^2 \right) ds.
\]

Next, we show that $A_\alpha =\frac{\alpha}{2},$ which is independent of both $n$ and $\xi$.

Using the recurrence relations for modified Bessel functions(see Lemma \ref{derivative} in the appendix for the proof)
\[
K_\nu'(s) = -\frac{\nu}{s} K_\nu(s) - K_{\nu-1}(s),
\qquad
(s^\nu K_\nu(s))' = -s^\nu K_{\nu-1}(s),
\]
with $\nu = \alpha/2$, we obtain
\[
m_\alpha'(s) = -\frac{2^{1-\alpha/2}}{\Gamma(\alpha/2)} s^{\alpha/2} K_{\alpha/2 - 1}(s).
\]

Consequently,
\[
s\left( |m_\alpha(s)|^2 + |m_\alpha'(s)|^2 \right)
= \left(\frac{2^{1-\alpha/2}}{\Gamma(\alpha/2)}\right)^2
s^{\alpha+1}
\left( K_{\alpha/2}(s)^2 + K_{\alpha/2 - 1}(s)^2 \right).
\]

It follows that
\[
A_\alpha = \left(\frac{2^{1-\alpha/2}}{\Gamma(\alpha/2)}\right)^2 (I_1 + I_2),
\]
where
\[
I_1 := \int_0^\infty s^{\alpha+1} K_{\alpha/2}(s)^2\,ds,
\qquad
I_2 := \int_0^\infty s^{\alpha+1} K_{\alpha/2 - 1}(s)^2\,ds.
\]

%We now turn to estimating $I_1$ and $I_2$ individually.
%We invoke the standard integral identity (see \cite[Appendix]{Stein71})
%\[
%\int_0^\infty s^{\lambda-1} K_\mu(s)^2\,ds
%= \frac{2^{\lambda-3}}{\Gamma(\lambda)}
%\Gamma\left(\frac{\lambda}{2}+\mu\right)
%\Gamma\left(\frac{\lambda}{2}-\mu\right)
%\Gamma\left(\frac{\lambda}{2}\right)^2,
%\]
%valid for $\lambda > 2|\mu|$.
By Lemma \ref{lemma2}, with $\lambda = \alpha+2$, we apply the formula with $\mu = \alpha/2$ for $I_1$, and $\mu = \alpha/2 - 1$ for $I_2$.  This yields
\[
I_1 = \frac{2^{\alpha-1}}{\Gamma(\alpha+2)}
\Gamma(\alpha+1)\Gamma(1)\Gamma\left(\frac{\alpha}{2}+1\right)^2,
\]
and
\[
I_2 = \frac{2^{\alpha-1}}{\Gamma(\alpha+2)}
\Gamma(\alpha)\Gamma(2)\Gamma\left(\frac{\alpha}{2}+1\right)^2.
\]

Using $\Gamma(1)=\Gamma(2)=1$, $\Gamma(\alpha+1)=\alpha\Gamma(\alpha)$, and $\Gamma(\alpha+2)=(\alpha+1)\alpha\Gamma(\alpha)$, we get
\[
I_1 + I_2
= \frac{2^{\alpha-1}\Gamma(\alpha/2+1)^2}{\Gamma(\alpha+2)}
\left( \Gamma(\alpha) + \alpha\Gamma(\alpha) \right)
= \frac{2^{\alpha-1}\Gamma(\alpha/2+1)^2}{\alpha}=2^{\alpha-3} \alpha \Gamma(\alpha/2)^2.
\]

%Since $\Gamma(\alpha/2+1) = \frac{\alpha}{2}\Gamma(\alpha/2)$, it follows that
%\[
%I_1 + I_2 = 2^{\alpha-3} \alpha \Gamma(\alpha/2)^2.
%\]

Therefore
\[
A_\alpha
= \left(\frac{2^{1-\alpha/2}}{\Gamma(\alpha/2)}\right)^2
\cdot 2^{\alpha-3} \alpha \Gamma(\alpha/2)^2
= \frac{\alpha}{2}.
\]

This proves
\[
\|g_\alpha(f)\|_{L^2(\mathbb R^n)}^2 = \frac{\alpha}{2}\|f\|_2^2,
\]
which is the desired $L^2$-identity.

\end{proof}

\section{Proof of Theorem \ref{thm:weak11}}

The proof of Theorem \ref{thm:weak11} relies on a modified Calder\'on-Zygmund decomposition due to Janakiraman \cite[Lemmas 5.1-5.2]{Janakiraman}. For the convenience of the reader, we recall this decomposition in the following lemma.

\begin{lemma}\cite[Lemmas 5.1-5.2]{Janakiraman}\label{lem:CZ}
	Let $f\in L^1(\mathbb{R}^n)$ and $\lambda>0$. Then there exists a decomposition $\mathbb{R}^n = G \cup E$ with $G\cap E = \varnothing$, where $E = \bigcup_k Q_k$ is a union of semi-cubes of the form $Q_k = \prod_{i=1}^n [a_i,b_i]$, with side lengths either $a$ or $2a$ for some $a>0$, and with pairwise disjoint interiors. Moreover, the following properties hold:
%	\begin{enumerate}

		(i) $\displaystyle \lambda \leq \frac{1}{|Q_k|}\int_{Q_k} |f(y)|\,dy \leq 2\lambda$ for each $k$;

		(ii) $\displaystyle |E| \leq \frac{1}{\lambda}\|f\|_{L^1(\mathbb{R}^n)}$;

		(iii) $|f(x)| \leq \lambda$ for almost every $x\in G$;

		(iv) $f = g + b$;
		
		(v) $\|g\|_{L^2(\mathbb{R}^n)}^2 \leq 5\lambda \|f\|_{L^1(\mathbb{R}^n)}$;

		(vi) $b = \sum_k b_k$, with each $b_k$ supported in $Q_k$, satisfying $\int b_k(x)\,dx = 0$, and

		\; \quad  \quad $\sum_k \|b_k\|_{L^1(\mathbb{R}^n)} \leq 2\|f\|_{L^1(\mathbb{R}^n)}.$
	%\end{enumerate}
\end{lemma}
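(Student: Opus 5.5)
The plan is to rerun the classical Calder\'on--Zygmund stopping-time argument with one change. Instead of passing from a dyadic cube to its $2^n$ children in a single step, which is what produces the dimensional factor $2^n$ in the upper bound for the averages, we bisect \emph{one coordinate at a time}. Each such bisection divides the volume by exactly $2$, so the average of $|f|$ can increase by a factor of at most $2$. This is the mechanism behind the two-sided bound $\lambda\le \frac1{|Q_k|}\int_{Q_k}|f|\le 2\lambda$ in (i).

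\textbf{Construction.} Since $f\in L^1(\mathbb R^n)$, fix a side length $a_0$ so large that $a_0^{-n}\|f\|_{L^1}<\lambda$. Tile $\mathbb R^n$ by cubes of side $a_0$. Define a nested tree of semi-cubes: a box of the form $\prod_i[a_i,b_i]$ in which the first $j$ coordinates have side $a/2$ and the remaining ones have side $a$ (with $0\le j<n$) is split into two halves by bisecting coordinate $j+1$. After $n$ steps all sides equal $a/2$, and the cycle restarts with $a/2$ in place of $a$. Every box in the tree therefore has side lengths $a$ or $2a$ for a suitable $a$. Starting from the initial tiling, we select the maximal boxes $Q$ in the tree with $\frac1{|Q|}\int_Q|f|\ge\lambda$ and call them $Q_k$. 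They have pairwise disjoint interiors by maximality. The parent $\tilde Q$ of each $Q_k$ was not selected, so its average is $<\lambda$ (for the top-level cubes this holds by the choice of $a_0$). Since $|\tilde Q|=2|Q_k|$, we get $\int_{Q_k}|f|\le\int_{\tilde Q}|f|<2\lambda|Q_k|$, which gives (i). Property (ii) follows by summing the lower bound in (i): $|E|=\sum|Q_k|\le\lambda^{-1}\sum_k\int_{Q_k}|f|\le\lambda^{-1}\|f\|_1$.

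\textbf{Property (iii) and the decomposition.} Set $G=\mathbb R^n\setminus E$. For $x\in G$, every box of the tree containing $x$ has average $<\lambda$. These boxes shrink to $x$ and have eccentricity at most $2$, so each is contained in a cube of comparable volume centered near $x$ (the comparability constant may depend on $n$, which is harmless here because the statement is qualitative). The Lebesgue differentiation theorem for this regular family then gives $|f(x)|\le\lambda$ for a.e.\ $x\in G$. I expect this step to be the only real subtlety, and the route is to reduce it to the Hardy--Littlewood maximal theorem. Next define $g=f$ on $G$ and $g=\frac1{|Q_k|}\int_{Q_k}f$ on $Q_k$, and put $b_k=(f-\frac1{|Q_k|}\int_{Q_k}f)\mathbf 1_{Q_k}$. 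Then (iv) holds and $\int b_k=0$. Moreover $\|b_k\|_1\le 2\int_{Q_k}|f|$, and summing gives (vi).

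\textbf{Property (v).} Finally, by (iii), (i) and (ii),
\[
\|g\|_2^2=\int_G|f|^2+\sum_k|Q_k|\Big|\tfrac1{|Q_k|}\textstyle\int_{Q_k}f\Big|^2\le \lambda\|f\|_1+4\lambda^2|E|\le 5\lambda\|f\|_1 .
\]
This is exactly the constant $5$ in the statement.
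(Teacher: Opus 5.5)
Your proof is correct and is essentially the construction behind Janakiraman's Lemmas 5.1--5.2, which the paper quotes without proof. That construction is a stopping-time selection on a tree of semi-cubes obtained by bisecting one coordinate at a time, so each selected box has a parent of exactly twice its volume, giving the factor $2$ in (i); then $g$ and $b_k$ are defined by averaging, and (v) comes from $\lambda\|f\|_1+4\lambda^2|E|\le 5\lambda\|f\|_1$. The differentiation step in (iii) is routine: a box with sides $a$ or $2a$ containing $x$ lies in the cube of side $4a$ centered at $x$, so at every Lebesgue point of $f$ the box averages of $|f|$ converge to $|f(x)|$.
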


The proof of Theorem \ref{thm:weak11} is primarily based on the approach used in \cite[Theorem 1.1]{Lai}, and we generalize this idea to the setting of fractional square functions.  In the new setting, the problem becomes rather complicated, and the Gamma function and Beta function will play an important role.
\begin{proof}
Fix $\lambda>0$ and $f\in L^1(\mathbb{R}^n)$. Applying Lemma \ref{lem:CZ} to $f$ at height $\lambda$, we obtain the decomposition $f = g + b$.
Since $g_\alpha$ is subadditive, we have
\begin{equation}\label{eq:subadd}
	\begin{aligned}
		&\left|\left\{x\in \mathbb{R}^n : g_\alpha(f)(x) > \lambda \right\}\right| \\
		&\qquad \leq
		\left|\left\{x\in \mathbb{R}^n : g_\alpha(g)(x) > \lambda/2 \right\}\right|
		+
		\left|\left\{x\in \mathbb{R}^n : g_\alpha(b)(x) > \lambda/2 \right\}\right|.
	\end{aligned}
\end{equation}

We first estimate the contribution of the good part $g$. By Chebyshev's inequality, Lemma \ref{lem:CZ}(v), and the $L^2$-boundedness of $g_\alpha$ established in Theorem \ref{thm:L2}, we obtain
\begin{equation}\label{eq:good}
	\begin{aligned}
		\left|\left\{x\in \mathbb{R}^n : g_\alpha(g)(x) > \lambda/2 \right\}\right|
		&\leq \frac{4}{\lambda^2} \|g_\alpha(g)\|_{L^2(\mathbb{R}^n)}^2 \\
		&\leq \frac{2\alpha}{\lambda^2} \|g\|_{L^2(\mathbb{R}^n)}^2 \\
		&\leq \frac{10\alpha}{\lambda} \|f\|_{L^1(\mathbb{R}^n)}.
	\end{aligned}
\end{equation}

It remains to control the bad part $b$. For each cube $Q_k$, let $y_k$ denote its center, and let its side length be either $a_k$ or $2a_k$. Then we have the following geometric estimates:
\begin{equation}\label{eq:dist}
	\frac{a_k}{2} \leq \inf_{y\in \partial Q_k} |y - y_k|
	\leq \sup_{y\in \partial Q_k} |y - y_k|
	\leq a_k \sqrt{n}.
\end{equation}

Let $\bar d_k$ denote the diameter of $Q_k$. Then
\[
\sqrt{n} a_k \leq \bar d_k \leq 2a_k \sqrt{n}.
\]

Define the enlarged set
\begin{equation}\label{eq:Qstar}
	Q_k^* := \bigcup_{y\in Q_k} B\left(y, \frac{\bar d_k}{4n^{3/2}}\right).
\end{equation}

Clearly, $Q_k \subset Q_k^*$. Moreover, we claim that $Q_k^* \subset (1+1/n)Q_k$, where $(1+1/n)Q_k$ denotes the concentric semi-cube with side length multiplied by $1+1/n$. Indeed, for any $x\in Q_k^*$,
\[
\operatorname{dist}(x,Q_k) \leq \frac{\bar d_k}{4n^{3/2}} \leq \frac{a_k}{2n},
\]
which implies $x\in (1+1/n)Q_k$.

Set
\[
H := \bigcup_k Q_k^*, \qquad F := H^c.
\]
Then
\begin{equation}\label{eq:splitH}
	\left|\left\{x\in \mathbb{R}^n : g_\alpha(b)(x) > \lambda/2 \right\}\right|
	\leq |H| + \left|\left\{x\in F : g_\alpha(b)(x) > \lambda/2 \right\}\right|.
\end{equation}

Applying Lemma \ref{lem:CZ}(ii), we obtain the estimate
\begin{equation}\label{eq:H}
	\begin{aligned}
		|H|\leq \sum_k |Q_k^*|
		\leq \left(1+\frac{1}{n}\right)^n \sum_k |Q_k| \leq e |E|
		\leq \frac{e}{\lambda} \|f\|_{L^1(\mathbb{R}^n)}.
	\end{aligned}
\end{equation}

Therefore, it suffices to establish the following estimate
\begin{equation}\label{eq:main}
	\left|\left\{x\in F : g_\alpha(b)(x) > \lambda/2 \right\}\right|
	\leq \frac{C_\alpha n^2}{\lambda} \|f\|_{L^1(\mathbb{R}^n)}.
\end{equation}

By Chebyshev's inequality and Lemma \ref{lem:CZ}(vi), we have
\begin{equation}\label{eq:badsum}
	\begin{aligned}
		\left|\left\{x\in F : g_\alpha(b)(x) > \lambda/2 \right\}\right|
		&\leq \frac{2}{\lambda} \sum_k \int_F g_\alpha(b_k)(x)\,dx \\
		&\leq \frac{2}{\lambda} \sum_k \int_{(Q_k^*)^c} g_\alpha(b_k)(x)\,dx.
	\end{aligned}
\end{equation}

It suffices to consider the case $k=1$, since all other terms are treated in the same manner. Note that $b_1$ is supported in $Q_1$ and has mean value zero. Without loss of generality, we may assume that $Q_1$ is the semi-cube with center $y_0$ and side length $a$ (or $2a$) for some $a>0$. Recall that
\[
g_\alpha(b_1)(x)
= \left( \int_0^\infty |\nabla (P_t^\alpha * b_1)(x)|^2 t\,dt \right)^{1/2},
\]
where
\[
|\nabla (P_t^\alpha * b_1)(x)|^2
= \sum_{i=1}^n |\partial_{x_i} P_t^\alpha * b_1(x)|^2
+ |\partial_t P_t^\alpha * b_1(x)|^2.
\]

Denote $u(x,t) := P_t^\alpha * b_1(x)$. We use the cancellation of $b_1$ in the following way:
\begin{equation}\label{eq:diff}
	u(x,t)
	= \int_{Q_1} b_1(y) \big( P_t^\alpha(x-y) - P_t^\alpha(x-y_0) \big)\,dy.
\end{equation}

A direct differentiation yields the following decompositions. For the spatial derivatives, we define
\begin{equation}\label{eq:A}
	A_i(x,t) :=
	 c_{n,\alpha} (n+\alpha)
	\int_{Q_1} b_1(y) t^\alpha
	\frac{y_i - y_{0,i}}{(t^2 + |x-y|^2)^{\frac{n+\alpha+2}{2}}}\,dy
\end{equation}
and
\begin{equation}\label{eq:B}
	\begin{aligned}
		B_i(x,t) := &- c_{n,\alpha} (n+\alpha)
		\int_{Q_1} b_1(y) t^\alpha \\
		&\times
		\left[
		\frac{x_i - y_{0,i}}{(t^2 + |x-y|^2)^{\frac{n+\alpha+2}{2}}}
		-
		\frac{x_i - y_{0,i}}{(t^2 + |x-y_0|^2)^{\frac{n+\alpha+2}{2}}}
		\right] dy,
	\end{aligned}
\end{equation}
such that
\[
\partial_{x_i} u(x,t) = A_i(x,t) + B_i(x,t).
\]

For the time derivative, we define
\begin{equation}\label{eq:D}
	\begin{aligned}
		D(x,t) := &c_{n,\alpha} \alpha
		\int_{Q_1} b_1(y) \\
		&\times
		\left[
		\frac{t^{\alpha-1}}{(t^2 + |x-y|^2)^{\frac{n+\alpha}{2}}}
		-
		\frac{t^{\alpha-1}}{(t^2 + |x-y_0|^2)^{\frac{n+\alpha}{2}}}
		\right] dy,
	\end{aligned}
\end{equation}
and
\begin{equation}\label{eq:E}
	\begin{aligned}
		E(x,t) := &c_{n,\alpha} (n+\alpha)
		\int_{Q_1} b_1(y) \\
		&\times
		\left[
		\frac{t^{\alpha+1}}{(t^2 + |x-y|^2)^{\frac{n+\alpha+2}{2}}}
		-
		\frac{t^{\alpha+1}}{(t^2 + |x-y_0|^2)^{\frac{n+\alpha+2}{2}}}
		\right] dy,
	\end{aligned}
\end{equation}
such that
\[
\partial_t u(x,t) = D(x,t) - E(x,t).
\]

From now on,  we will suppress the arguments of $A_i(x,t), B_i(x,t), D(x,t), E(x,t)$, etc., and denote them simply by $A_i, B_i, D, E.$
In the following subsections, we estimate the contributions of $A_i$, $B_i$, $D$, and $E$, respectively. To this end, we give some useful inequalities below:
\begin{equation}\label{eq:3.14}
	\begin{aligned}
		\left(\int_{0}^{\infty} \frac{t^{2\alpha+1}}{\left(t^{2}+r^{2}\right)^{n+\alpha+2}}dt\right)^{1/2}&=\left( \int_{0}^{\infty} \frac{x^{2\alpha+1}}{(x^2+1)^{n+\alpha+2}} \, dx\right)^{1/2}r^{-n-1}\\
		&=\left(\frac{1}{2}B(\alpha+1,n+1)\right)^{1/2} r^{-n-1}\\
&\leq \left( \frac{1}{2} \Gamma (\alpha + 1) n^{-\left( \alpha + 1 \right)} \right)^{\frac{1}{2}}r^{-n-1}\\
		&= C_\alpha n^{-(\alpha+1)/2}r^{-n-1}.
	\end{aligned}
	\end{equation}

Similarly, we have
	\begin{equation}\label{eq:3.15}
		\begin{aligned}
			\left(\int_{0}^{\infty} \frac{t^{2\alpha+1}}{\left(t^{2}+r^{2}\right)^{n+\alpha+4}}dt\right)^{1/2}
			&=\left(\frac{1}{2}B(\alpha+1,n+3)\right)^{1/2} r^{-n-3}\\
			&\leq C_\alpha n^{-(\alpha+1)/2}r^{-n-3}.
		\end{aligned}
		\end{equation}
		\begin{equation}\label{eq:3.16}
			\begin{aligned}
				\left(\int_{0}^{\infty} \frac{t^{2\alpha-1}}{\left(t^{2}+r^{2}\right)^{n+\alpha+2}}dt\right)^{1/2}
				&=\left(\frac{1}{2}B(\alpha,n+2)\right)^{1/2} r^{-n-2}\\
				&\leq C_\alpha n^{-\alpha/2}r^{-n-2}.
			\end{aligned}
		\end{equation}
		\begin{equation}\label{eq:3.17}
			\begin{aligned}
				\left(\int_{0}^{\infty} \frac{t^{2\alpha-1}}{\left(t^{2}+r^{2}\right)^{n+\alpha}}dt\right)^{1/2}
				&=\left(\frac{1}{2}B(\alpha,n)\right)^{1/2} r^{-n}\\
				&\leq C_\alpha n^{-\alpha/2}r^{-n}.
			\end{aligned}
			\end{equation}
			\begin{equation}\label{eq:3.18}
				\begin{aligned}
					\left(\int_{0}^{\infty} \frac{t^{2\alpha+3}}{\left(t^{2}+r^{2}\right)^{n+\alpha+4}}dt\right)^{1/2}
					&=\left(\frac{1}{2}B(\alpha+2,n+2)\right)^{1/2} r^{-n-2}\\
					&\leq C_\alpha n^{-(\alpha+2)/2}r^{-n-2}.
				\end{aligned}
			\end{equation}
			\begin{equation}\label{eq:3.19}
				\begin{aligned}
					\left(\int_{0}^{\infty} \frac{t^{2\alpha+3}}{\left(t^{2}+r^{2}\right)^{n+\alpha+2}}dt\right)^{1/2}
					&=\left(\frac{1}{2}B(\alpha+2,n)\right)^{1/2} r^{-n}\\
					&\leq C_\alpha n^{-(\alpha+2)/2}r^{-n}.
				\end{aligned}
			\end{equation}

\subsection{Estimate of $A_i$}

We split $A_i$ into two parts according to the relative size of $|x-y|$ and $|y-y_0|$:
\[
A_i(x,t) = A_{i,1}(x,t) + A_{i,2}(x,t),
\]
where
\[
A_{i,1}(x,t)
:= \int_{Q_1 \cap \{y : |x-y| > n|y-y_0|\}}
c_{n,\alpha} (n+\alpha) b_1(y) t^\alpha
\frac{y_i - y_{0,i}}{(t^2 + |x-y|^2)^{\frac{n+\alpha+2}{2}}}\,dy,
\]
and
\[
A_{i,2}(x,t)
:= \int_{Q_1 \cap \{y : |x-y| \leq n|y-y_0|\}}
c_{n,\alpha} (n+\alpha) b_1(y) t^\alpha
\frac{y_i - y_{0,i}}{(t^2 + |x-y|^2)^{\frac{n+\alpha+2}{2}}}\,dy.
\]

\medskip

\noindent \textbf{Estimate of $A_{i,1}$.}
For $x\in F$ and $y\in Q_1$ with $|x-y| > n|y-y_0|$, we have
\[
\frac{n}{n+1} |x-y_0| < |x-y| < \frac{n}{n-1} |x-y_0|.
\]

For this part, Minkowski's inequality gives
\[
\begin{aligned}
	\left( \int_{0}^{\infty} \sum_{i=1}^{n} \big|A_{i,1}(x,t)\big|^2 t \,dt \right)^{\frac12}
	&\le c_{n,\alpha}(n+\alpha)
	\int_{\substack{y\in Q_1 \\ |x-y|>n|y-y_0|}}
	|b_1(y)|
	\left( \sum_{i=1}^{n} \big|y_i - y_{0,i}\big|^2 \right)^{\frac12}\\
	&\times
	\left( \int_{0}^{\infty} \frac{t^{2\alpha+1}}{\big(t^2 + |x-y|^2\big)^{n+\alpha+2}} dt \right)^{\frac12} dy.
\end{aligned}
\]
By (\ref{eq:3.14}), we get
\[
\begin{aligned}
	\left(
	\int_0^\infty\sum_{i=1}^n|A_{i,1}(x,t)|^2t\,dt
	\right)^{1/2}
	&\le c_{n,\alpha}(n+\alpha)
	\left(\frac12B(\alpha+1,n+1)\right)^{1/2}\\
	&\times\int_{Q_1}|b_1(y)|\,|y-y_0|\,|x-y|^{-n-1}\,dy.
\end{aligned}
\]

In this region
\[
|x-y_0|\le |x-y|+|y-y_0|
<\left(1+\frac1n\right)|x-y|,
\]
and hence
\[
|x-y|^{-n-1}
\le\left(\frac{n+1}{n}\right)^{n+1}|x-y_0|^{-n-1}
\le C|x-y_0|^{-n-1}.
\]
Using $|y-y_0|\le a\sqrt n$ and $|x-y_0|\ge a/2$, we obtain
\[
\begin{aligned}
	&\int_{F}
	\left(
	\int_0^\infty\sum_{i=1}^n|A_{i,1}(x,t)|^2t\,dt
	\right)^{1/2}dx\\
	&\le Cc_{n,\alpha}(n+\alpha)a\sqrt n
	\left(\frac12B(\alpha+1,n+1)\right)^{1/2}
	\|b_1\|_{L^1}
	\int_{|x-y_0|\ge a/2}|x-y_0|^{-n-1}\,dx\\
	&=2Cc_{n,\alpha}\omega_{n-1}(n+\alpha)\sqrt n
	\left(\frac12B(\alpha+1,n+1)\right)^{1/2}
	\|b_1\|_{L^1(\mathbb{R}^n)} \\
	&\lesssim C_\alpha n\|b_1\|_{L^1(\mathbb{R}^n)},
\end{aligned}
\]
where we used the estimate
$c_{n, \alpha} w_{n-1}\lesssim C_{\alpha} \cdot n^{\frac{\alpha}{2}}$, which can be obtained directly by Stirling's formula.
\medskip

\noindent \textbf{Estimate of $A_{i,2}$.}
For $x\in F$, $y\in Q_1$, and $|x-y| \leq n|y-y_0|$, the definition of $F$ implies
\[
|x-y| \geq \frac{|y-y_0|}{2n^{3/2}}.
\]

Applying Minkowski's inequality and Fubini's theorem, then passing to polar coordinates, we obtain
\[
\begin{aligned}
	&\int_{F}
	\left(
	\int_0^\infty\sum_{i=1}^n|A_{i,2}(x,t)|^2t\,dt
	\right)^{1/2}dx\\
	&\le c_{n,\alpha}(n+\alpha)
	\left(\frac12B(\alpha+1,n+1)\right)^{1/2}
	\int_{Q_1}|b_1(y)|\,|y-y_0|\\
	&\qquad\times
	\int_{\substack{x\in(Q_1^*)^c\\|x-y|\le n|y-y_0|}}
	|x-y|^{-n-1}\,dx\,dy.
\end{aligned}
\]

For each fixed $y\in Q_1$, it follows that
\[
\begin{aligned}
	&|y-y_0|
	\int_{\substack{x\in(Q_1^*)^c\\|x-y|\le n|y-y_0|}}
	|x-y|^{-n-1}\,dx\\
	&\le\omega_{n-1}|y-y_0|
	\int_{|y-y_0|/(2n^{3/2})}^{n|y-y_0|}\frac{ds}{s^2}\\
	&=\omega_{n-1}\left(2n^{3/2}-\frac1n\right)
	\le2\omega_{n-1}n^{3/2}.
\end{aligned}
\]

Consequently
\[
\begin{aligned}
	&\int_{F}
	\left(
	\int_0^\infty\sum_{i=1}^n|A_{i,2}(x,t)|^2t\,dt
	\right)^{1/2}dx\\
	&\le Cc_{n,\alpha}\omega_{n-1}(n+\alpha)n^{3/2}
	\left(\frac12B(\alpha+1,n+1)\right)^{1/2}
	\|b_1\|_{L^1(\mathbb{R}^n)} \\
	&\lesssim C_\alpha n^2\|b_1\|_{L^1(\mathbb{R}^n)} .
\end{aligned}
\]

Thus, we obtain
\[
	\begin{aligned}
		&\int_{F}
		\left(
		\int_0^\infty\sum_{i=1}^n|A_{i,1}(x,t)|^2t\,dt
		\right)^{1/2}dx + \int_{F}
		\left(
		\int_0^\infty\sum_{i=1}^n|A_{i,2}(x,t)|^2t\,dt
		\right)^{1/2}dx\\
		&\lesssim C_\alpha n^2\|b_1\|_{L^1(\mathbb{R}^n)}.
\end{aligned}
\]

\subsection{Estimate of $B_i$}

We decompose $B_i$ analogously:
\[
B_i(x,t) = B_{i,1}(x,t) + B_{i,2}(x,t),
\]
where $B_{i,1}$ corresponds to the region $|x-y| > n|y-y_0|$ and $B_{i,2}$ to $|x-y| \leq n|y-y_0|$.

\medskip

\noindent \textbf{Estimate of $B_{i,1}$.}
In the region $|x-y| > n|y-y_0|$, we have
\[
\frac{n}{n+1}|x-y_0| < |x-y| < \frac{n}{n-1}|x-y_0|,
\]

and
 $$|y-y_0| \leq \sqrt{n} a.$$

  By the mean value theorem, and (\ref{eq:3.15}), we derive
  \[
  \begin{aligned}
  	&\int_{F}
  	\left(
  	\int_0^\infty\sum_{i=1}^n|B_{i,1}(x,t)|^2t\,dt
  	\right)^{1/2}dx\\
  	&\le Cc_{n,\alpha}(n+\alpha)(n+\alpha+2)a\sqrt n
  	\left(\frac12B(\alpha+1,n+3)\right)^{1/2}
  	\|b_1\|_{L^1}\\
  	&\qquad\times
  	\int_{|x-y_0|\ge a/2}|x-y_0|^{-n-1}\,dx\\
  	&\le Cc_{n,\alpha}\omega_{n-1}(n+\alpha)(n+\alpha+2)\sqrt n
  	\left(\frac12B(\alpha+1,n+3)\right)^{1/2}
  	\|b_1\|_{L^1}\\
  	&\lesssim C_\alpha n^2\|b_1\|_{L^1(\mathbb{R}^n)}.
  \end{aligned}
  \]

\medskip

\noindent \textbf{Estimate of $B_{i,2}$.}
In the region $|x-y| \leq n|y-y_0|$, we have
\[
|x-y| \geq \frac{|y-y_0|}{2n^{3/2}},
\qquad
|x-y_0| \geq \frac{|y-y_0|}{2n^{3/2}},
\qquad
|x-y_0| \leq n|y-y_0|.
\]

Minkowski's inequality, Fubini's theorem and (\ref{eq:3.14}) yields
\begin{align*}
	& \int_F \left( \int_0^\infty \sum_{i=1}^n |B_{i,2}(x,t)|^2 t\,dt \right)^{1/2} dx \\
	& \le c_{n,\alpha}(n+\alpha)
	\left(\frac12B(\alpha+1,n+1)\right)^{1/2}
	\int_{Q_1} |b_1(y)|
	\left[
	\int_{\{ |y-y_0|/(2n^{3/2}) \leq |x-y| \leq n|y-y_0| \}}
	\frac{|x-y_0|}{|x-y|^{n+1}} \,dx \right. \\
	& \left.
	\quad +
	\int_{\{ |y-y_0|/(2n^{3/2}) \leq |x-y_0| \leq n|y-y_0| \}}
	\frac{1}{|x-y_0|^n} \,dx
	\right] dy .\\
\end{align*}

Set \(\rho=|y-y_0|,\) and denote
\begin{align*}
	I_1(y)
	&:=
	\int_{\{\rho/(2n^{3/2})\le |x-y|\le n\rho\}}
	\frac{|x-y_0|}{|x-y|^{n+1}}\,dx,\\
	I_2(y)
	&:=
	\int_{\{\rho/(2n^{3/2})\le |x-y_0|
		\le n\rho\}}
	\frac{1}{|x-y_0|^n}\,dx.
\end{align*}

Thus
\begin{align*}
	&\int_F
	\left(
	\int_0^\infty
	\sum_{i=1}^n |B_{i,2}(x,t)|^2\,t\,dt
	\right)^{1/2}dx\\
	&\qquad\le c_{n,\alpha}(n+\alpha)
	\left(\frac12B(\alpha+1,n+1)\right)^{1/2}
	\int_{Q_1}|b_1(y)|
	\bigl(I_1(y)+I_2(y)\bigr)\,dy.
\end{align*}

For \(I_1(y)\), the triangle inequality gives \(|x-y_0|\le |x-y|+|y-y_0|:= |x-y|+\rho.\)  Hence,
\begin{align*}
	I_1(y)
	&\le
	\int_{\{\rho/(2n^{3/2})\le |x-y|\le n\rho\}}
	\frac{|x-y|+\rho}{|x-y|^{n+1}}\,dx\\
	&=
	\omega_{n-1}
	\int_{\rho/(2n^{3/2})}^{n\rho}
	\frac{r+\rho}{r^{n+1}}r^{n-1}\,dr\\
	&=
	\omega_{n-1}
	\int_{\rho/(2n^{3/2})}^{n\rho}
	\left(\frac1r+\frac{\rho}{r^2}\right)\,dr\\
	&=
	\omega_{n-1}
	\left[
	\log(2n^{5/2})
	+2n^{3/2}-\frac1n
	\right]\\
	&\lesssim
	\omega_{n-1}n^{3/2}.
\end{align*}

For \(I_2(y)\), a direct computation yields
\begin{align*}
	I_2(y)
	&\le
	\omega_{n-1}
	\int_{\rho/(2n^{3/2})}^{n\rho}
	\frac{r^{n-1}}{r^n}\,dr\\
	&=
	\omega_{n-1}
	\int_{\rho/(2n^{3/2})}^{n\rho}
	\frac{dr}{r}\\
	&=
	\omega_{n-1}
	\log\bigl(2n^{5/2}\bigr).\\
\end{align*}

The preceding estimates gives
\begin{align*}
I_1(y)+I_2(y)
\lesssim
\omega_{n-1}n^{3/2}.
\end{align*}

Consequently
\begin{align*}
	&\int_F
	\left(
	\int_0^\infty
	\sum_{i=1}^n |B_{i,2}(x,t)|^2\,t\,dt
	\right)^{1/2}dx\\
	&\qquad\le c_{n,\alpha}(n+\alpha)
	\left(\frac12B(\alpha+1,n+1)\right)^{1/2}
	\int_{Q_1}|b_1(y)|
	\,\omega_{n-1}n^{3/2}\,dy\\
	&\qquad\lesssim C_\alpha n^2\|b_1\|_{L^1(\mathbb{R}^n)}.
\end{align*}

Thus, we obtain
\[
\begin{aligned}
	&\int_{F}
	\left(
	\int_0^\infty\sum_{i=1}^n|B_{i,1}(x,t)|^2t\,dt
	\right)^{1/2}dx +
	\int_{F}
	\left(
	\int_0^\infty\sum_{i=1}^n|B_{i,2}(x,t)|^2t\,dt
	\right)^{1/2}dx\\
&	\lesssim C_\alpha n^2\|b_1\|_{L^1(\mathbb{R}^n)}.
\end{aligned}
\]
\subsection{Estimate of $D$}

We split $D$ as
\[
D(x,t) = D_1(x,t) + D_2(x,t),
\]
where $D_1$ and $D_2$ correspond to the regions $|x-y| > n|y-y_0|$ and $|x-y| \leq n|y-y_0|$, respectively.

\medskip

\noindent \textbf{Estimate of $D_1$.}
For $x\in (Q_1^*)^c$, $y\in Q_1$, and $|x-y| > n|y-y_0|$, we have
\[
|x-y| > \frac{n}{n+1}|x-y_0|, \qquad |x-y_0| \geq \frac{a}{2}, \qquad |y-y_0| \leq \sqrt{n} a.
\]

By the mean value theorem
\[
\begin{aligned}
	|D_1(x,t)|
	&\le Cc_{n,\alpha}\alpha(n+\alpha)a\sqrt n\,
	\|b_1\|_{L^1(\mathbb{R}^n)} \\
	&\quad\times
	\frac{|x-y_0|t^{\alpha-1}}
	{\left(t^2+\left(\frac{n}{n+1}|x-y_0|\right)^2\right)^{(n+\alpha+2)/2}}.
\end{aligned}
\]

Furthermore, by (\ref{eq:3.16}), we derive
\[
\begin{aligned}
	&\left(
	\int_0^\infty
	\frac{t^{2\alpha-1}}
	{\left(t^2+\left(\frac{n}{n+1}|x-y_0|\right)^2\right)^{n+\alpha+2}}\,dt
	\right)^{1/2}\\
	&=\left(\frac12B(\alpha,n+2)\right)^{1/2}
	\left(\frac{n+1}{n}\right)^{n+2}|x-y_0|^{-n-2}\\
	&\le C\left(\frac12B(\alpha,n+2)\right)^{1/2}|x-y_0|^{-n-2}.
\end{aligned}
\]

Thus, we obtain
\[
\begin{aligned}
	&\int_F
	\left(\int_0^\infty|D_1(x,t)|^2t\,dt\right)^{1/2}dx\\
	&\le Cc_{n,\alpha}\omega_{n-1}\alpha(n+\alpha)\sqrt n
	\left(\frac12B(\alpha,n+2)\right)^{1/2}
	\|b_1\|_{L^1(\mathbb{R}^n)} \\
	&\lesssim C_\alpha n^{3/2}\|b_1\|_{L^1(\mathbb{R}^n)} .
\end{aligned}
\]

\medskip

\noindent \textbf{Estimate of $D_2$.}
In the region $|x-y| \leq n|y-y_0|$, we have
\[
\frac{|y-y_0|}{2n^{3/2}} \leq |x-y| \leq n|y-y_0|,
\qquad
\frac{|y-y_0|}{2n^{3/2}} \leq |x-y_0| \leq (n+1)|y-y_0|.
\]

Minkowski's inequality, Fubini's theorem and (\ref{eq:3.17}) gives
\[
\begin{aligned}
	&\left(\int_0^\infty|D_2(x,t)|^2t\,dt\right)^{1/2}\\
	&\le c_{n,\alpha}\alpha
	\left(\frac12B(\alpha,n)\right)^{1/2}
	\int_{\substack{y\in Q_1\\|x-y|\le n|y-y_0|}}
	|b_1(y)|
	\left[|x-y|^{-n}+|x-y_0|^{-n}\right]dy.
\end{aligned}
\]

The two spatial integrals satisfy
\[
\int_{\frac{|y-y_0|}{2n^{3/2}}\le|x-y|\le n|y-y_0|}
|x-y|^{-n}\,dx
=\omega_{n-1}\log(2n^{5/2})
\]
and
\[
\int_{\frac{|y-y_0|}{2n^{3/2}}\le|x-y_0|\le(n+1)|y-y_0|}
|x-y_0|^{-n}\,dx
=\omega_{n-1}\log\bigl(2(n+1)n^{3/2}\bigr).
\]

Therefore
\[
\begin{aligned}
	&\int_{(Q_1^*)^c}
	\left(\int_0^\infty|D_2(x,t)|^2t\,dt\right)^{1/2}dx\\
	&\le Cc_{n,\alpha}\omega_{n-1}\alpha
	\left(\frac12B(\alpha,n)\right)^{1/2}
	\log(en)\|b_1\|_{L^1(\mathbb{R}^n)} \\
	&\lesssim C_\alpha\log(en)\|b_1\|_{L^1(\mathbb{R}^n)} .
\end{aligned}
\]

Thus, we obtain
\[
	\begin{aligned}
		&\int_F
		\left(\int_0^\infty|D_1(x,t)|^2t\,dt\right)^{1/2}dx +
		\int_F
		\left(\int_0^\infty|D_2(x,t)|^2t\,dt\right)^{1/2}dx\\
		&\lesssim C_\alpha n^{3/2}\|b_1\|_{L^1(\mathbb{R}^n)} .
\end{aligned}
\]

\subsection{Estimate of $E$}

Finally, we split $E$ as
\[
E(x,t) = E_1(x,t) + E_2(x,t),
\]
with $E_1$ corresponding to $|x-y| > n|y-y_0|$ and $E_2$ to $|x-y| \leq n|y-y_0|$.

\medskip

\noindent \textbf{Estimate of $E_1$.}
In the region $|x-y| > n|y-y_0|$, we have
\[
|x-y| > \frac{n}{n+1}|x-y_0|, \qquad |x-y_0| \geq \frac{a}{2}, \qquad |y-y_0| \leq \sqrt{n} a.
\]

By the mean value theorem
\[
\begin{aligned}
	|E_1(x,t)|
	&\le Cc_{n,\alpha}(n+\alpha)(n+\alpha+2)a\sqrt n\,
	\|b_1\|_{L^1(\mathbb{R}^n)} \\
	&\quad\times
	\frac{|x-y_0|t^{\alpha+1}}
	{\left(t^2+\left(\frac{n}{n+1}|x-y_0|\right)^2\right)^{(n+\alpha+4)/2}}.
\end{aligned}
\]

Moreover, by (\ref{eq:3.18}), we derive
\[
\begin{aligned}
	&\left(
	\int_0^\infty
	\frac{t^{2\alpha+3}}
	{\left(t^2+\left(\frac{n}{n+1}|x-y_0|\right)^2\right)^{n+\alpha+4}}\,dt
	\right)^{1/2}\\
	&=\left(\frac12B(\alpha+2,n+2)\right)^{1/2}
	\left(\frac{n+1}{n}\right)^{n+2}|x-y_0|^{-n-2}\\
	&\le C\left(\frac12B(\alpha+2,n+2)\right)^{1/2}|x-y_0|^{-n-2}.
\end{aligned}
\]

Consequently
\[
\begin{aligned}
	&\int_F
	\left(\int_0^\infty|E_1(x,t)|^2t\,dt\right)^{1/2}dx\\
	&\le Cc_{n,\alpha}\omega_{n-1}(n+\alpha)(n+\alpha+2)\sqrt n
	\left(\frac12B(\alpha+2,n+2)\right)^{1/2}
	\|b_1\|_{L^1(\mathbb{R}^n)} \\
	&\lesssim C_\alpha n^{3/2}\|b_1\|_{L^1(\mathbb{R}^n)} .
\end{aligned}
\]

\medskip

\noindent \textbf{Estimate of $E_2$.}
Using the same argument as for $B_{i,2}$, $D_2$ and (\ref{eq:3.19}), we have
\[
\begin{aligned}
	&\left(\int_0^\infty|E_2(x,t)|^2t\,dt\right)^{1/2}\\
	&\le c_{n,\alpha}(n+\alpha)
	\left(\frac12B(\alpha+2,n)\right)^{1/2}
	\int_{\substack{y\in Q_1\\|x-y|\le n|y-y_0|}}
	|b_1(y)|
	\left[|x-y|^{-n}+|x-y_0|^{-n}\right]dy.
\end{aligned}
\]

Using the same two logarithmic spatial integrals as above, we find
\[
\begin{aligned}
	&\int_F
	\left(\int_0^\infty|E_2(x,t)|^2t\,dt\right)^{1/2}dx\\
	&\le Cc_{n,\alpha}\omega_{n-1}(n+\alpha)
	\left(\frac12B(\alpha+2,n)\right)^{1/2}
	\log(en)\|b_1\|_{L^1(\mathbb{R}^n)} \\
	&\lesssim C_\alpha\log(en)\|b_1\|_{L^1(\mathbb{R}^n)} .
\end{aligned}
\]

Thus, we obtain
\[
	\begin{aligned}
		&\int_F
		\left(\int_0^\infty|E_1(x,t)|^2t\,dt\right)^{1/2}dx	+
		\int_F
		\left(\int_0^\infty|E_2(x,t)|^2t\,dt\right)^{1/2}dx\\
		&\lesssim C_\alpha n^{3/2}\|b_1\|_{L^1(\mathbb{R}^n)} .
\end{aligned}
\]

\medskip

Combining the estimates for $A_i$, $B_i$, $D$, and $E$ obtained above, we conclude that
\begin{equation}\label{eq:final}
	\int_F g_\alpha(b_1)(x)\,dx
	\lesssim
	C_\alpha n^{2} \|b_1\|_{L^1(\mathbb{R}^n)} .
\end{equation}

Summing \eqref{eq:final} over all $k$ and applying \eqref{eq:badsum}, we obtain
\[
\left|\left\{x\in F : g_\alpha(b)(x) > \lambda/2 \right\}\right|
\lesssim \frac{C_\alpha n^{2}}{\lambda} \|f\|_{L^1(\mathbb{R}^n)}.
\]
This establishes \eqref{eq:main}. Combining this with \eqref{eq:good},  \eqref{eq:splitH}, and \eqref{eq:H} completes the proof of Theorem \ref{thm:weak11}.
\end{proof}

\section{Proof of Theorem \ref{thm:limit}}

In this section, we establish the limiting weak-type behavior of $g_\alpha$ as the level parameter $\lambda$ tends to zero. The proof proceeds via a scaling argument combined with a decomposition of $f$ into its near and far parts, and sharp point-wise asymptotics of the gradient kernel.

\begin{proof}
Let $\lambda>0$ and set $s := \lambda^{1/n}$. For $f\in L^1(\mathbb{R}^n)$, we define the scaled function
\[
f_s(y) := \frac{1}{s^n} f\left(\frac{y}{s}\right).
\]

A direct scaling argument yields
\[
g_\alpha(f_s)(x) = \frac{1}{s^n} g_\alpha(f)\left(\frac{x}{s}\right).
\]

Indeed, this follows from the fact that the Poisson kernel $P_t^\alpha$ and the measure $t\,dt$ scale homogeneously. Consequently
\[
\lambda \left|\left\{x\in \mathbb{R}^n : g_\alpha(f)(x) > \lambda \right\}\right|
= \left|\left\{x\in \mathbb{R}^n : g_\alpha(f_s)(x) > 1 \right\}\right|.
\]

Denote
\[
D^s := \left\{x\in \mathbb{R}^n : g_\alpha(f_s)(x) > 1 \right\}.
\]

Since $\lambda \to 0^+$ if and only if $s \to 0^+$, it suffices to prove the following limit
\begin{equation}\label{eq:limitgoal}
	\lim_{s\to 0^+} |D^s|
	= \frac{c_{n,\alpha} \omega_{n-1}}{n}
	\sqrt{\frac{n!}{2\prod_{k=1}^{n-1}(k+\alpha)}}
	\left|\int_{\mathbb{R}^n} f(x)\,dx\right|.
\end{equation}

Without loss of generality, we assume that $\|f\|_{L^1(\mathbb{R}^n)} = 1$. For $0<\delta<1/5$, the absolute continuity of the Lebesgue integral implies that for every $0<\epsilon<\delta/2$, there exists $a_\epsilon>0$ such that
\[
\int_{B(0,a_\epsilon)} |f(x)|\,dx = 1 - \epsilon.
\]

Set
\[
f^0 := f \chi_{B(0,a_\epsilon)}, \qquad f^\infty := f - f^0.
\]

Let $f_s^0$ and $f_s^\infty$ denote the scaled versions of $f^0$ and $f^\infty$, respectively, i.e.,
\[
f_s^0(y) := \frac{1}{s^n} f^0\left(\frac{y}{s}\right), \qquad
f_s^\infty(y) := \frac{1}{s^n} f^\infty\left(\frac{y}{s}\right).
\]

Then $f_s = f_s^0 + f_s^\infty$, and we have the $L^1$-norm identities
\[
\|f_s^0\|_{L^1(\mathbb{R}^n)} = 1 - \epsilon, \qquad
\|f_s^\infty\|_{L^1(\mathbb{R}^n)} = \epsilon.
\]

For any $\gamma>0$, define the superlevel sets
\[
E_\gamma^s := \left\{x\in \mathbb{R}^n : g_\alpha(f_s^\infty)(x) > \gamma \right\},
\qquad
F_\gamma^s := \left\{x\in \mathbb{R}^n : g_\alpha(f_s^0)(x) > \gamma \right\}.
\]

Since $g_\alpha$ is subadditive, we have the inclusions
\[
F_{1+\delta}^s \subset E_\delta^s \cup D^s,
\qquad
D^s \subset E_\delta^s \cup F_{1-\delta}^s.
\]

Consequently,
\begin{equation}\label{eq:sandwich}
	|F_{1+\delta}^s| - |E_\delta^s|
	\leq |D^s|
	\leq |E_\delta^s| + |F_{1-\delta}^s|.
\end{equation}

By Theorem \ref{thm:weak11}, we have
\[
|E_\delta^s|
= \left|\left\{x\in \mathbb{R}^n : g_\alpha(f_s^\infty)(x) > \delta \right\}\right|
\leq \frac{C_{n,\alpha}}{\delta} \|f_s^\infty\|_{L^1(\mathbb{R}^n)}
= \frac{C_{n,\alpha} \epsilon}{\delta},
\]
where $C_{n,\alpha}$ is a constant depending on $n$ and $\alpha$.

It remains to estimate $|F_{1-\delta}^s|$. To this end, we employ Minkowski's inequality. For $x\in \mathbb{R}^n$, we have
\[
g_\alpha(f_s^0)(x)
\leq H(f_s^0)(x) + J_\alpha(x) \left|\int_{\mathbb{R}^n} f_s^0(y)\,dy\right|,
\]
where
\[
H(f_s^0)(x) := \left\| \int_{\mathbb{R}^n} \nabla \big[ P_t^\alpha(x-y) - P_t^\alpha(x) \big] f_s^0(y)\,dy \right\|_{L^2(tdt)},
\]

and
\[
J_\alpha(x) := \left( \int_0^\infty |\nabla P_t^\alpha(x)|^2 t\,dt \right)^{1/2}.
\]

Similarly, we have the corresponding lower bound
\[
g_\alpha(f_s^0)(x)
\geq J_\alpha(x) \left|\int_{\mathbb{R}^n} f_s^0(y)\,dy\right| - H(f_s^0)(x).
\]

Choose $\eta > 2s a_\epsilon$ and define
\[
G_s := \left\{ x\in B(0,\eta)^c : H(f_s^0)(x) > \delta \right\}.
\]

With this definition, we clearly have
\[
|F_{1-\delta}^s|
\leq |B(0,\eta)| + |G_s| + |F_{1-\delta}^s \cap B(0,\eta)^c \cap G_s^c|.
\]

We first estimate $|G_s|$. By Chebyshev's inequality and Minkowski's inequality, we obtain

	\begin{align*}
|G_s|&
\leq \frac{1}{\delta} \int_{B(0,\eta)^c} H(f_s^0)(x)\,dx\\
&\leq \frac{1}{\delta} \int_{B(0,\eta)^c} \int_{\mathbb{R}^n} |f_s^0(y)|
\left\| \nabla \big[ P_t^\alpha(x-y) - P_t^\alpha(x) \big] \right\|_{L^2(tdt)} \,dy\,dx.
\end{align*}

Since $|x| > \eta > 2s a_\epsilon > 2|y|$ for $y\in \operatorname{supp}(f_s^0) \subset B(0,s a_\epsilon)$, we have
\[
\frac{1}{2} |x| < |x-y| < \frac{3}{2} |x|.
\]

A standard gradient estimate  gives
\[
\left\| \nabla \big[ P_t^\alpha(x-y) - P_t^\alpha(x) \big] \right\|_{L^2(tdt)}
\leq C c_{n,\alpha} 2^n n^{7/2} \frac{|y|}{|x|^{n+1}}.
\]

Hence, in view of the above estimate, we conclude
\begin{equation}\label{eq:Gsbound}
	\begin{aligned}
		|G_s|
		&\leq \frac{C c_{n,\alpha} 2^n n^{7/2}}{\delta}
		\int_{B(0,\eta)^c} \int_{B(0,s a_\epsilon)} |f_s^0(y)| |y|\,dy \frac{dx}{|x|^{n+1}} \\
		&\leq \frac{C_{n,\alpha} s a_\epsilon}{\delta} \int_{B(0,s a_\epsilon)} |f_s^0(y)|\,dy \int_{B(0,\eta)^c} \frac{dx}{|x|^{n+1}} \\
		&\leq C_{n,\alpha}\frac{s a_\epsilon}{\delta \eta}.
	\end{aligned}
\end{equation}

It remains to estimate the set
\[
F_{1-\delta}^s \cap B(0,\eta)^c \cap G_s^c.
\]

For $x\neq 0$, $P_t^\alpha(x)$ is smooth with respect to $t\in (0,\infty)$. Moreover, the gradient $\nabla P_t^\alpha(x)$ is given explicitly by
\[
\nabla P_t^\alpha(x)
=
\left(
\frac{c_{n,\alpha} t^{\alpha-1}(\alpha |x|^2 - n t^2)}{(t^2 + |x|^2)^{\frac{n+\alpha}{2} + 1}},
-
\frac{c_{n,\alpha}(n+\alpha) t^\alpha x_1}{(t^2 + |x|^2)^{\frac{n+\alpha}{2} + 1}},
\dots,
-
\frac{c_{n,\alpha}(n+\alpha) t^\alpha x_n}{(t^2 + |x|^2)^{\frac{n+\alpha}{2} + 1}}
\right).
\]

Using the elementary integral formula
\[
\int_0^\infty \frac{t^k}{(t^2 + r^2)^m}\,dt
= \frac{\Gamma\left(\frac{k+1}{2}\right) \Gamma\left(m - \frac{k+1}{2}\right)}{2 r^{2m-k-1} \Gamma(m)},
\]
valid for $k>-1$ and $m>(k+1)/2$, we compute
\begin{equation}\label{eq:Jexplicit}
	J_\alpha(x)
	= \left\| \nabla P_t^\alpha(x) \right\|_{L^2(tdt)}
	= \frac{c_{n,\alpha}}{|x|^n}
	\sqrt{\frac{n!}{2\prod_{k=1}^{n-1}(k+\alpha)}}.
\end{equation}

For $x\in B(0,\eta)^c \cap G_s^c$, we have $H(f_s^0)(x) \leq \delta$. Combining the upper and lower bounds for $g_\alpha(f_s^0)(x)$, we obtain
\[
J_\alpha(x) \left|\int_{\mathbb{R}^n} f_s^0(y)\,dy\right| - \delta
\leq g_\alpha(f_s^0)(x)
\leq J_\alpha(x) \left|\int_{\mathbb{R}^n} f_s^0(y)\,dy\right| + \delta.
\]

Note that for $x\in F_{1-\delta}^s$, it follows that $g_\alpha(f_s^0)(x) > 1-\delta$. Therefore, combining with the right-hand side of the above inequality, we have
\[
F_{1-\delta}^s \cap B(0,\eta)^c \cap G_s^c
\subseteq
\left\{ x\in B(0,\eta)^c \cap G_s^c :
J_\alpha(x) \left|\int_{\mathbb{R}^n} f_s^0(y)\,dy\right| > 1 - 2\delta \right\}.
\]

Recall the elementary measure formula
\[
\left|\left\{ x\in \mathbb{R}^n : \frac{1}{|x|^n} > \mu \right\}\right|
= \frac{\omega_{n-1}}{n\mu}, \qquad \mu>0.
\]

Then by the above inclusion, this measure formula, and \eqref{eq:Jexplicit}, we obtain
\begin{equation}\label{eq:Fupper}
	\begin{aligned}
		&|F_{1-\delta}^s \cap B(0,\eta)^c \cap G_s^c| \\
		&\qquad \leq
		\left|\left\{ x\in \mathbb{R}^n :
		J_\alpha(x) \left|\int_{\mathbb{R}^n} f_s^0(y)\,dy\right| > 1 - 2\delta \right\}\right| \\
		&\qquad =
		\left|\left\{ x\in \mathbb{R}^n :
		\frac{c_{n,\alpha}}{|x|^n}
		\sqrt{\frac{n!}{2\prod_{k=1}^{n-1}(k+\alpha)}}
		\left|\int_{\mathbb{R}^n} f_s^0(y)\,dy\right| > 1 - 2\delta \right\}\right| \\
		&\qquad \leq
		\frac{c_{n,\alpha} \omega_{n-1}}{n(1-2\delta)}
		\sqrt{\frac{n!}{2\prod_{k=1}^{n-1}(k+\alpha)}}
		\left( \left|\int_{\mathbb{R}^n} f(y)\,dy\right| + \epsilon \right),
	\end{aligned}
\end{equation}
where in the last step we have used the fact that
\[
\left|\int_{\mathbb{R}^n} f(y)\,dy\right| - \epsilon
\leq
\left|\int_{\mathbb{R}^n} f_s^0(y)\,dy\right|
\leq
\left|\int_{\mathbb{R}^n} f(y)\,dy\right| + \epsilon.
\]

Next, we consider the left-hand side of \eqref{eq:sandwich}. The set $F_{1+\delta}^s$ can be treated in the same manner. Clearly
\[
|F_{1+\delta}^s| \geq |F_{1+\delta}^s \cap B(0,\eta)^c \cap G_s^c|.
\]

Observe that for $x\in B(0,\eta)^c \cap G_s^c$ satisfying
\[
J_\alpha(x) \left|\int_{\mathbb{R}^n} f_s^0(y)\,dy\right| > 1 + 2\delta,
\]
the lower bound for $g_\alpha(f_s^0)(x)$ implies that $g_\alpha(f_s^0)(x) > 1+\delta$. Therefore
\[
F_{1+\delta}^s \cap B(0,\eta)^c \cap G_s^c
\supseteq
\left\{ x\in B(0,\eta)^c \cap G_s^c :
J_\alpha(x) \left|\int_{\mathbb{R}^n} f_s^0(y)\,dy\right| > 1 + 2\delta \right\}.
\]

Moreover, using \eqref{eq:Jexplicit} and \eqref{eq:Gsbound}, we obtain
\begin{equation}\label{eq:Flower}
	\begin{aligned}
		&|F_{1+\delta}^s \cap B(0,\eta)^c \cap G_s^c| \\
		&\qquad \geq
		\left|\left\{ x\in \mathbb{R}^n :
		J_\alpha(x) \left|\int_{\mathbb{R}^n} f_s^0(y)\,dy\right| > 1 + 2\delta \right\}\right|
		- |B(0,\eta)| - |G_s| \\
		&\qquad \geq
		\frac{c_{n,\alpha} \omega_{n-1}}{n(1+2\delta)}
		\sqrt{\frac{n!}{2\prod_{k=1}^{n-1}(k+\alpha)}}
		\left( \left|\int_{\mathbb{R}^n} f(y)\,dy\right| - \epsilon \right)
		- \frac{\omega_{n-1}\eta^n}{n}
		- C_{n,\alpha} \frac{s a_\epsilon}{\delta \eta}.
	\end{aligned}
\end{equation}

To this end, in view of estimates \eqref{eq:sandwich}, \eqref{eq:Gsbound}, and \eqref{eq:Fupper}, we conclude
\begin{equation}\label{eq:Dsupper}
	\begin{aligned}
		|D^s|
		\leq& |E_\delta^s| + |B(0,\eta)| + |G_s| + |F_{1-\delta}^s \cap B(0,\eta)^c \cap G_s^c| \\
		\leq &\frac{C_{n,\alpha} \epsilon}{\delta}
		+ \frac{\omega_{n-1}\eta^n}{n}
		+ C_{n,\alpha} \frac{s a_\epsilon}{\delta \eta}\\
		&+ \frac{c_{n,\alpha} \omega_{n-1}}{n(1-2\delta)}
		\sqrt{\frac{n!}{2\prod_{k=1}^{n-1}(k+\alpha)}}
		\left( \left|\int_{\mathbb{R}^n} f(y)\,dy\right| + \epsilon \right),
	\end{aligned}
\end{equation}
which implies
\[
\begin{aligned}
	\limsup_{s\to 0^+} |D^s|
	&\leq \frac{C_{n,\alpha} \epsilon}{\delta}
	+ \frac{\omega_{n-1}\eta^n}{n}
	+ \frac{c_{n,\alpha} \omega_{n-1}}{n(1-2\delta)}
	\sqrt{\frac{n!}{2\prod_{k=1}^{n-1}(k+\alpha)}}
	\left( \left|\int_{\mathbb{R}^n} f(y)\,dy\right| + \epsilon \right).
\end{aligned}
\]

Note that $\epsilon$ is arbitrarily small relative to the other constants. Letting $\epsilon \to 0^+$, then $\delta \to 0^+$, and $\eta \to 0^+$, we obtain
\begin{equation}\label{eq:limsupfinal}
	\limsup_{s\to 0^+} |D^s|
	\leq
	\frac{c_{n,\alpha} \omega_{n-1}}{n}
	\sqrt{\frac{n!}{2\prod_{k=1}^{n-1}(k+\alpha)}}
	\left|\int_{\mathbb{R}^n} f(y)\,dy\right|.
\end{equation}

Similarly, in view of estimates \eqref{eq:sandwich}, \eqref{eq:Gsbound}, and \eqref{eq:Flower}, we conclude
\begin{equation}\label{eq:Dslower}
	\begin{aligned}
		|D^s|
		\geq &|F_{1+\delta}^s \cap B(0,\eta)^c \cap G_s^c| - |E_\delta^s| - |B(0,\eta)| - |G_s| \\
		\geq&
		\frac{c_{n,\alpha} \omega_{n-1}}{n(1+2\delta)}
		\sqrt{\frac{n!}{2\prod_{k=1}^{n-1}(k+\alpha)}}
		\left( \left|\int_{\mathbb{R}^n} f(y)\,dy\right| - \epsilon \right)\\
		&- \frac{C_{n,\alpha} \epsilon}{\delta}
		- \frac{2\omega_{n-1}\eta^n}{n}
		- 2C_{n,\alpha} \frac{s a_\epsilon}{\delta \eta}.
	\end{aligned}
\end{equation}

Letting $s\to 0^+$, then $\epsilon\to 0^+$, $\delta\to 0^+$, and $\eta\to 0^+$, we get
\begin{equation}\label{eq:liminffinal}
	\liminf_{s\to 0^+} |D^s|
	\geq
	\frac{c_{n,\alpha} \omega_{n-1}}{n}
	\sqrt{\frac{n!}{2\prod_{k=1}^{n-1}(k+\alpha)}}
	\left|\int_{\mathbb{R}^n} f(y)\,dy\right|.
\end{equation}

Combining \eqref{eq:limsupfinal} and \eqref{eq:liminffinal}, we obtain
\[
\lim_{s\to 0^+} |D^s|
=
\frac{c_{n,\alpha} \omega_{n-1}}{n}
\sqrt{\frac{n!}{2\prod_{k=1}^{n-1}(k+\alpha)}}
\left|\int_{\mathbb{R}^n} f(x)\,dx\right|.
\]

This proves \eqref{eq:limitgoal} and hence completes the proof of Theorem \ref{thm:limit}.

\end{proof}

\appendix
\section{The derivative of the modified Bessel function}
To the best of our knowledge, although the derivative formula for $K_\nu$  is known, a comprehensive proof has not appeared in the literature. We therefore supply a complete proof for the convenience of the reader.

\begin{lemma}\label{derivative}
For $K_\nu$ defined by $$K_\nu(z)= \frac{\pi i}{2}\,e^{\frac{\pi}{2}\nu i}\; H_\nu^{(1)}\!\bigl(z e^{\frac{\pi}{2}i}\bigr),H_\nu^{(1)}(z)= -\frac{i}{\pi}\,e^{-\frac{1}{2}i\nu\pi}
	\int_0^\infty \exp\!\Bigl(\frac{1}{2}iz\,(t+t^{-1})\Bigr)\,t^{-\nu-1}\,dt,$$
the following differentiation formulas holds:
\begin{align*}
	K_\nu'(z)=-\frac{\nu}{z}K_\nu(z)-K_{\nu-1}(z)\;, \;(s^\nu K_\nu(s))' = -s^\nu K_{\nu-1}(s).
\end{align*}
\end{lemma}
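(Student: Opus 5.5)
We first simplify the integral representation of $K_\nu$ and then differentiate it under the integral sign. Substitute $z e^{\pi i/2}=iz$ into the formula for $H_\nu^{(1)}$. This gives
\[
K_\nu(z)=\frac{\pi i}{2}e^{\frac{\pi}{2}\nu i}\Bigl(-\frac{i}{\pi}\Bigr)e^{-\frac12 i\nu\pi}\int_0^\infty \exp\!\Bigl(-\tfrac{z}{2}(t+t^{-1})\Bigr)t^{-\nu-1}\,dt
=\frac12\int_0^\infty e^{-\frac z2(t+t^{-1})}t^{-\nu-1}\,dt .
\]
This holds for $\Re z>0$, where the integral converges absolutely at both $0$ and $\infty$. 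The change of variables $t\mapsto 1/t$ gives the symmetry $K_{-\nu}=K_\nu$, which we will use below.

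\textbf{Step 1.} Differentiate under the integral sign. This is justified by dominated convergence on compact subsets of $\Re z>0$, since $(t+t^{-1})e^{-\frac{\Re z}{2}(t+t^{-1})}t^{-\Re\nu-1}$ is integrable. We obtain
\[
K_\nu'(z)=-\frac14\int_0^\infty e^{-\frac z2(t+t^{-1})}\bigl(t^{-\nu}+t^{-\nu-2}\bigr)\,dt=-\frac12\bigl(K_{\nu-1}(z)+K_{\nu+1}(z)\bigr).
\]

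\textbf{Step 2.} Obtain the three-term relation $K_{\nu+1}-K_{\nu-1}=\frac{2\nu}{z}K_\nu$ by integrating by parts. We have
\[
\frac{d}{dt}\Bigl(e^{-\frac z2(t+t^{-1})}t^{-\nu}\Bigr)=e^{-\frac z2(t+t^{-1})}\Bigl(-\frac z2 t^{-\nu}+\frac z2 t^{-\nu-2}-\nu t^{-\nu-1}\Bigr).
\]
The boundary terms vanish at $t=0$ and at $t=\infty$ because the exponential decays faster than any power. Integrating over $(0,\infty)$ therefore gives
\[
0=-\frac z2\cdot 2K_{\nu-1}+\frac z2\cdot 2K_{\nu+1}-2\nu K_\nu ,
\]
which is exactly the claimed relation. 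This is the step that needs care: the power indices must be matched correctly, since $t^{-\nu}=t^{-(\nu-1)-1}$ corresponds to $K_{\nu-1}$ and $t^{-\nu-2}$ corresponds to $K_{\nu+1}$. Once that is checked, the rest is routine.

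\textbf{Step 3.} Eliminate $K_{\nu+1}$ by substituting $K_{\nu+1}=K_{\nu-1}+\frac{2\nu}{z}K_\nu$ into Step 1. This gives
\[
K_\nu'(z)=-K_{\nu-1}(z)-\frac{\nu}{z}K_\nu(z).
\]
Then the product rule yields
\[
(s^\nu K_\nu(s))'=\nu s^{\nu-1}K_\nu(s)+s^\nu K_\nu'(s)=-s^\nu K_{\nu-1}(s),
\]
which completes the proof for real $s>0$ and, more generally, for $\Re s>0$.
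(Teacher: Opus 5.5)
Your proof is correct and follows essentially the paper's route: reduce to $K_\nu(z)=\frac12\int_0^\infty e^{-\frac z2(t+t^{-1})}t^{-\nu-1}\,dt$, differentiate under the integral to get $K_\nu'=-\frac12(K_{\nu-1}+K_{\nu+1})$, integrate by parts to get $K_{\nu-1}-K_{\nu+1}=-\frac{2\nu}{z}K_\nu$, and eliminate $K_{\nu+1}$. The differences are cosmetic: you integrate the exact derivative of $e^{-\frac z2(t+t^{-1})}t^{-\nu}$ directly instead of first rewriting $(t-t^{-1})e^{-\frac z2(t+t^{-1})}$, and you supply the convergence justifications and the product-rule step that the paper leaves implicit; note also that the symmetry $K_{-\nu}=K_\nu$ you announce is never actually used.
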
	
\begin{proof}
	Since the second formula follows readily from the first, we only prove the former. Replace $z$ by $iz$,
	\begin{align*}
		H_\nu^{(1)}(i z)=&-\frac{i}{\pi}e^{-\frac{1}{2}i\nu\pi}
		\int_0^\infty \exp\!\Bigl(\frac{1}{2}i(i z)(t+t^{-1})\Bigr)\,t^{-\nu-1}dt\\
		=&-\frac{i}{\pi}e^{-\frac{1}{2}i\nu\pi}\int_0^\infty e^{-\frac{z}{2}(t+t^{-1})}\,t^{-\nu-1}dt.
	\end{align*}	

	Then
	\begin{align*}
		K_\nu(z)&=\frac{\pi i}{2}e^{\frac{\pi}{2}\nu i}\; H_\nu^{(1)}(i z)\\
		&=\frac{\pi i}{2}e^{\frac{\pi}{2}\nu i}\Bigl(-\frac{i}{\pi}e^{-\frac{1}{2}i\nu\pi}\Bigr)
		\int_0^\infty e^{-\frac{z}{2}(t+t^{-1})}t^{-\nu-1}dt.
	\end{align*}

%	The constant factor simplifies: $\frac{\pi i}{2}\cdot(-\frac{i}{\pi})=\frac12$, and the exponential factors combine to $e^{\frac{\pi}{2}\nu i}e^{-\frac{1}{2}i\nu\pi}=1$.  Hence
By a straightforward computation, we get
	\begin{equation}\label{KVZ}
			\begin{aligned}
			K_\nu(z)=\frac12\int_0^\infty e^{-\frac{z}{2}(t+t^{-1})}\,t^{-\nu-1}dt.
		\end{aligned}
	\end{equation}

Hence
	\begin{align*}
		K_\nu'(z)&=\frac12\int_0^\infty\frac{\partial}{\partial z}\Bigl(e^{-\frac{z}{2}(t+t^{-1})}\Bigr)t^{-\nu-1}dt\\
		&=\frac12\int_0^\infty\Bigl(-\frac12(t+t^{-1})\Bigr)e^{-\frac{z}{2}(t+t^{-1})}t^{-\nu-1}dt\\
		&=-\frac14\int_0^\infty(t+t^{-1})e^{-\frac{z}{2}(t+t^{-1})}t^{-\nu-1}dt\\
		&=-\frac14\Bigl(\int_0^\infty t\,e^{-\frac{z}{2}(t+t^{-1})}t^{-\nu-1}dt+\int_0^\infty t^{-1}e^{-\frac{z}{2}(t+t^{-1})}t^{-\nu-1}dt\Bigr)\\
		&=-\frac14\Bigl(\int_0^\infty t^{-\nu}e^{-\frac{z}{2}(t+t^{-1})}dt+\int_0^\infty t^{-\nu-2}e^{-\frac{z}{2}(t+t^{-1})}dt\Bigr).
	\end{align*}

	From (\ref{KVZ}), we have
	\begin{align*}
		K_{\nu-1}(z)&=\frac12\int_0^\infty e^{-\frac{z}{2}(t+t^{-1})}t^{-(\nu-1)-1}dt=\frac12\int_0^\infty t^{-\nu}e^{-\frac{z}{2}(t+t^{-1})}dt,\\
		K_{\nu+1}(z)&=\frac12\int_0^\infty e^{-\frac{z}{2}(t+t^{-1})}t^{-(\nu+1)-1}dt=\frac12\int_0^\infty t^{-\nu-2}e^{-\frac{z}{2}(t+t^{-1})}dt.
	\end{align*}	

Consequently
	\begin{align*}
		\int_0^\infty t^{-\nu}e^{-\frac{z}{2}(t+t^{-1})}dt&=2K_{\nu-1}(z),\\
		\int_0^\infty t^{-\nu-2}e^{-\frac{z}{2}(t+t^{-1})}dt&=2K_{\nu+1}(z).
	\end{align*}

	Therefore
	\begin{equation}\label{ire}
		\begin{aligned}
			K_\nu'(z)=-\frac14\bigl(2K_{\nu-1}(z)+2K_{\nu+1}(z)\bigr)=-\frac12\bigl(K_{\nu-1}(z)+K_{\nu+1}(z)\bigr).
		\end{aligned}
	\end{equation}
	
	We next consider $K_{\nu-1}(z)-K_{\nu+1}(z)$.
	\begin{align*}
		K_{\nu-1}-K_{\nu+1}=\frac12\int_0^\infty e^{-\frac{z}{2}(t+t^{-1})}\bigl(t^{-\nu}-t^{-\nu-2}\bigr)dt
		=\frac12\int_0^\infty e^{-\frac{z}{2}(t+t^{-1})}t^{-\nu-1}(t-t^{-1})dt.
	\end{align*}

Note that
	\begin{align*}
		(t-t^{-1})e^{-\frac{z}{2}(t+t^{-1})}=-\frac{2}{z}\,t\,\frac{d}{dt}e^{-\frac{z}{2}(t+t^{-1})}.
	\end{align*}

	Inserting this into $K_{\nu-1}(z)-K_{\nu+1}(z)$ gives
	\begin{align*}
		K_{\nu-1}-K_{\nu+1}=\frac12\int_0^\infty t^{-\nu-1}\Bigl(-\frac{2}{z}t\frac{d}{dt}e^{-\frac{z}{2}(t+t^{-1})}\Bigr)dt
		=-\frac{1}{z}\int_0^\infty t^{-\nu}\frac{d}{dt}e^{-\frac{z}{2}(t+t^{-1})}dt.
	\end{align*}

	Integrating by parts, we have
	\begin{align*}
		\int_0^\infty t^{-\nu}\frac{d}{dt}e^{-\frac{z}{2}(t+t^{-1})}dt
		&=\Bigl[t^{-\nu}e^{-\frac{z}{2}(t+t^{-1})}\Bigr]_0^\infty-\int_0^\infty e^{-\frac{z}{2}(t+t^{-1})}\frac{d}{dt}(t^{-\nu})dt\\
		&=0-(-\nu\int_0^\infty e^{-\frac{z}{2}(t+t^{-1})}t^{-\nu-1}dt)\\
		&=\nu\int_0^\infty e^{-\frac{z}{2}(t+t^{-1})}t^{-\nu-1}dt.
	\end{align*}

	Hence
	\begin{equation}\label{rel}
		\begin{aligned}
			K_{\nu-1}(z)-K_{\nu+1}(z)&=-\frac{1}{z}\nu\int_0^\infty e^{-\frac{z}{2}(t+t^{-1})}t^{-\nu-1}dt&=-\frac{2\nu}{z}K_\nu(z).
		\end{aligned}	
	\end{equation}
	
	From (\ref{ire}) and (\ref{rel}) we have
	\begin{align*}
		K_{\nu-1}(z)+K_{\nu+1}(z)=-2K_\nu'(z),\qquad
		K_{\nu-1}(z)-K_{\nu+1}(z)=-\frac{2\nu}{z}K_\nu(z).
	\end{align*}

	Adding the two equations and dividing by $2$ gives
	\begin{align*}
		K_{\nu-1}(z) = -K_\nu'(z)-\frac{\nu}{z}K_\nu(z),
	\end{align*}
	which is precisely the desired identity
	\begin{align*}
		K_\nu'(z)=-\frac{\nu}{z}K_\nu(z)-K_{\nu-1}(z).
	\end{align*}
\end{proof}

\end{document}